\documentclass[a4paper, oneside, reqno, 12pt]{amsart}
\usepackage[utf8]{inputenc}
\usepackage{amsmath, amssymb, amsfonts}
\usepackage{fullpage}
\usepackage{textcomp, cmap, comment}
\usepackage{mathtools} 
\usepackage[nobysame, initials]{amsrefs}

\usepackage{comment}
\usepackage{color, xcolor}
\usepackage{graphicx}
\usepackage{tikz}
\usepackage{tikz,tkz-base}
\usepackage{pgf,tikz, tkz-euclide}
\usetikzlibrary{calc}
\usepackage{tkz-euclide}
\usepackage{pgfplots}
\usepackage{relsize}
\usepackage[shellescape]{gmp}

\usepackage{subcaption}
\captionsetup[subfloat]{labelfont=normalfont}

\theoremstyle{plain}
\newtheorem{theorem}{Theorem}

\newtheorem{lemma}[theorem]{Lemma}
\newtheorem{corollary}[theorem]{Corollary}

\newtheorem{keyobservation}[theorem]{Observation}
\theoremstyle{definition}
\newtheorem{problem}[theorem]{Problem}

\theoremstyle{remark}
\newtheorem{remark}[theorem]{Remark}

\DeclareMathOperator{\dist}{dist}
\DeclareMathOperator{\conv}{conv}

\newcommand{\Ball}{B}
\newcommand{\BallK}{K} 

\usepackage{pgf,tikz, mathrsfs}
\usetikzlibrary{arrows}

\usepackage{hyperref}
\hypersetup{
    colorlinks   = true, 
    urlcolor     = blue, 
    linkcolor    = red, 
    citecolor   = green
}
\title{Intersecting diametral balls induced by\\a geometric graph}

\author[O.~Pirahmad, A.~Polyanskii, A.~Vasilevskii]{{Olimjoni~Pirahmad, Alexandr~Polyanskii, Alexey~Vasilevskii}}
\address{Olimjoni Pirahmad,
\newline\hphantom{iii} Visual Computing Center, KAUST, Thuwal 23955-6900, Saudi Arabia
}
\email{\href{mailto:pirahmad.olimjoni@kaust.edu.sa}{pirahmad.olimjoni@kaust.edu.sa
}}

\address{Alexandr Polyanskii,
\newline\hphantom{iii} Moscow Institute of Physics and Technology, Institutskiy per. 9, Dolgoprudny, Russia 141700
}
\email{\href{mailto:alexander.polyanskii@yandex.ru}{alexander.polyanskii@yandex.ru}}
\urladdr{\url{http://polyanskii.com}}

\address{Alexey Vasilevskii,
\newline\hphantom{iii} Moscow Institute of Physics and Technology, Institutskiy per. 9, Dolgoprudny, Russia 141700
}
\email{\href{mailto:lesha.vasilevski@mail.ru}{lesha.vasilevski@mail.ru}}

\keywords{Tverberg's theorem, geometric graph, perfect matching, red-blue matching, Hamiltonian cycle, alternating cycle, infinite descent, halving line, \(\alpha\)-lense, arrangements of convex bodies}
\subjclass[2010]{51K99, 05C50, 51F99, 52C99, 05A99}

\thanks{The research of the second and third authors was funded by the grant of Russian Science Federation \textnumero21-71-10092, \url{https://rscf.ru/project/21-71-10092/}. The second author is a Young Russian Mathematics award winner and would like to thank its sponsors and jury.} 

\begin{document}

\thispagestyle{empty}

\begin{abstract}
For a graph whose vertex set is a finite set  of points in the Euclidean \(d\)-space consider the closed (open) balls with diameters induced by its edges. The graph is called \textit{a (an open) Tverberg graph} if these closed (open) balls intersect. Using the idea of halving lines, we show that (\textit{i}) for any finite set of points in the plane, there exists a Hamiltonian cycle that is a  Tverberg graph; (\textit{ii}) for any \( n \) red and \( n \) blue points in the plane, there exists a perfect red-blue matching that is a Tverberg graph. Also, we prove that (\textit{iii}) for any even set of points in the Euclidean \(d\)-space, there exists a perfect matching that is an open Tverberg graph; (\textit{iv}) for any \( n \) red and \( n \) blue points in the Euclidean \(d\)-space, there exists a perfect red-blue matching that is a Tverberg graph.
\end{abstract}

\maketitle
\section{Introduction}

Tverberg's theorem is one of the essential results of modern discrete and convex geometry proved by Helge Tverberg~\cite{Tverberg1966} in 1966. It claims that for any set of \( (r-1)(d+1)+1 \) points in \( \mathbb R^d \), there exists a partition with \( r \) parts whose convex hulls intersect.

In the current paper, we consider a variation of Tverberg's problem introduced recently in \cites{bereg2019maximum, huemer2019matching, soberon2020tverberg}. 
For two points \( x, y\in \mathbb{R}^d \), we denote by \( \Ball (xy) \) the closed Euclidean ball for which the segment \( xy \) is its diameter. Let \( G \) be a graph whose vertex set is a finite set of points in \( \mathbb{R}^d \). We say that \( G \) is a \textit{Tverberg graph} if
\[
    \bigcap_{ xy\in E(G)} \Ball (xy) \neq \emptyset.
\]
Replacing \textit{closed} balls by \textit{open} balls in the definition of Tverberg graph, we define an open Tverberg graph. A graph whose vertices are points in \(\mathbb R^d\) is called an \textit{open Tverberg graph} if the open balls with diameters induced by its edges intersect. Notice that there is a Tverberg graph which is not an open Tverberg graph, that is, the intersection of the closed balls induced by its edges is a single point lying on the boundary of some of them. For example, any Hamiltonian cycle for the set of vertices of a square is such a graph. For the sake of brevity, a perfect matching for an even set of points in \( \mathbb R^d \) is called \textit{a (an open) Tverberg matching} if it is a (an open) Tverberg graph. Analogously, a Hamiltonian cycle for a set of points in \( \mathbb R^d \) is called a \textit{Tverberg cycle} if it is a Tverberg graph.

In 2019, Huemer, P\'erez-Lantero, Seara, and Silveira~\cite{huemer2019matching} showed that for any set of \( n \) red points and \( n \) blue points in the plane, there is a red-blue Tverberg matching (every edge connects points of different colors). This result can be considered as a colorful variation of the problem. Moreover, they proved that the matching maximizing the sum of the squared distances between the matched points is a Tverberg red-blue matching; see also Remark~\ref{remark:function Q in Huemer paper}. 

Later, Bereg, Chac\'on-Rivera, Flores-Pe\~naloza, Huemer, and P\'erez-Lantero~\cite{bereg2019maximum} found a second proof of the monochromatic version of the result from~\cite{huemer2019matching}, that is, for any \( 2n \) points in the plane, there is a Tverberg matching. Also, they showed that the matching maximizing the sum of the distances between the matched points is a desired matching.

Recently, Sober\'on and Tang~\cite{soberon2020tverberg} showed the existence of a Tverberg cycle for an \textit{odd} set of points in the plane. As a corollary of this theorem, they proved that for any \textit{even} set of points in the plane, there is a Hamiltonian path that is a Tverberg graph. Since a Hamitonian path for an even set of vertices contains a perfect matching, this corollary implies the result from~\cite{huemer2019matching}. Also, Sober\'on and Tang initiated the study of Tverberg graphs in higher dimensions, and in particular, they considered the problem of describing the family of Tverberg graphs for a finite set of points in \( \mathbb R^d \); see Problem~1.1 in~\cite{soberon2020tverberg}. 

In 2007, twelve years before the paper~\cite{bereg2019maximum}, Dumitrescu, Pach, and T\'oth proved a lemma that is interesting in the context of Tverberg matchings; see Lemma~2 from~\cite{dumitrescu2009drawing}. This lemma easily implies that for any \( 2n \) distinct points in the plane, there exist a perfect matching \( \mathcal M \) and a point \( z \) in the plane such that either \( z \in \{x,y\} \) or \( \angle xzy \geq 2\pi/3 \) for all \( xy\in \mathcal M\). This result can be viewed as a strengthening of the existence of a Tverberg matching for an even set of points in the plane. Namely, this result claims that the common point of disks induced by the edges of the matching lies relatively deep inside of each disk. We refer to Subsection~\ref{section:discussion-intersection-of-lenses}, where we discuss a higher-dimensional version of this problem.

The goal of the current paper is to show a variety of methods that can be useful in proving the existence of Tverberg cycles or matchings for point sets.

Using the idea of halving lines~\cite{lovasz1971number}, we give short proofs of the following results.
\begin{theorem}
\label{theorem:cycleintheplane}
For any finite set of points in the plane, there exists a Tverberg cycle.
\end{theorem}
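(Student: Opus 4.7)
The plan is to exhibit a point $z \in \mathbb{R}^2$ and a Hamiltonian cycle $C$ on $P$ such that $z \in D(pq)$ for every edge $pq \in E(C)$. Since $z \in D(pq)$ if and only if $\angle pzq \ge \pi/2$, it suffices to make every edge of $C$ subtend an angle of at least $\pi/2$ at $z$. A standard perturbation-and-compactness argument lets me reduce to the case where $P$ is in general position: a Tverberg cycle for a small perturbation of $P$ yields (along a convergent subsequence of common points) a Tverberg cycle for the original configuration.

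The key step is to choose $z$ so that when the points of $P$ are enumerated in cyclic angular order $p_1, \ldots, p_n$ around $z$, every open sector at $z$ of angular width strictly less than $\pi/2$ contains at most $\lfloor (n-1)/2 \rfloor$ points of $P$; equivalently, every arc of $\lceil (n+1)/2 \rceil$ consecutive points in the cyclic order has angular span at least $\pi/2$ at $z$. This is where halving lines enter: the $\binom{n}{2}$ lines through pairs of points of $P$ subdivide the plane into cells on which the cyclic angular order of $P$ around $z$ is constant, and the combinatorics of the halving edges of $P$ (in particular the parity fact that each vertex of $P$ is incident to an odd number of halving edges) can be used to locate a cell containing an angularly balanced $z$. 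For $n$ odd this reduces to a ham-sandwich / rotating-line argument producing a $z$ of Tukey depth essentially $(n+1)/2$; for $n$ even the argument is more delicate and exploits the halving-edge parity.

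With an angularly balanced $z$ in hand, the cycle $C$ is constructed directly from the angular order around $z$. For $n$ odd, I take $C$ to be the star polygon with step $k = (n-1)/2$: since $\gcd(k,n) = 1$ the sequence $p_1 \to p_{1+k} \to p_{1+2k} \to \cdots$ is a single Hamiltonian cycle, and each edge $p_i p_{i+k}$ joins two vertices whose shorter cyclic arc consists of $(n+1)/2$ consecutive points; by the balance condition this arc subtends at least $\pi/2$ at $z$, so $z \in D(p_i p_{i+k})$. For $n$ even the star polygon with step $n/2$ splits into $n/2$ disjoint ``antipodal'' edges, so I instead form $C$ by interleaving this matching with a second matching, chosen from the halving-edge structure of $P$, so that the union is a single Hamiltonian cycle in which every edge still corresponds to a cyclic step of at least $\lceil n/2 \rceil - 1$ and hence subtends $\ge \pi/2$ at $z$.

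The main obstacle I anticipate is the even case. Both the existence of an angularly balanced $z$ and the construction of a Hamiltonian cycle respecting that balance are subtle, and configurations such as three points forming a triangle with a fourth inside show that Tukey depth $n/2$ need not be attainable and that a naive star-polygon construction must be supplemented. The finer combinatorics of halving edges should resolve these issues, and it is precisely this use of the halving-line structure that motivates the language of the theorem's proof.
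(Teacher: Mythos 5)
Your reduction to finding a point $z$ at which every edge of the cycle subtends an angle of at least $\pi/2$ is exactly the paper's starting point, but the way you propose to realize it has a genuine gap, and in the even case the key object you need does not exist. Everything rests on the ``angularly balanced'' point $z$ (no open sector of width less than $\pi/2$ containing more than $\lfloor (n-1)/2\rfloor$ points), and you never prove it exists. The route you gesture at --- a point of Tukey depth about $(n+1)/2$ --- is unavailable: already for three generic points there is no point through which every open half-plane contains at most one of them, so no such high-depth point exists for any $n\ge 3$. More seriously, for $n=4$ your balance condition says every open sector of width less than $\pi/2$ contains at most one point, which forces the four points to sit at angular spacing exactly $\pi/2$ around $z$; such a $z$ would have to lie simultaneously on the four circles $\partial D(p_ip_{i+1})$, which is impossible for a generic quadrilateral. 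So the even case cannot be repaired inside your framework, and the interleaving-of-matchings step you sketch for it is in any case not an argument.

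The paper avoids this entirely by demanding much less of the center point. It takes two orthogonal bisecting lines $\ell$ and $\ell_{\pi/2}$ meeting at $o$ (always available), and only uses the resulting four \emph{closed} quadrants: any segment with endpoints in opposite closed quadrants subtends an angle $\ge \pi/2$ at $o$. The cycle is then not a star polygon in the angular order but two alternating runs, one between $Q_1$ and $Q_3$ and one between $Q_2$ and $Q_4$, spliced together at the one or two points of $S$ lying \emph{on} the bisecting lines (which belong to two quadrants at once); the bisecting property gives exactly the quadrant counts $s_1=s_3$, $s_2=s_4$ (up to the $\pm 1$ corrections in the parity cases) needed to close this up into a single Hamiltonian cycle. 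If you want to salvage your write-up, replace the uniform angular balance by this quadrant-count condition and replace the star polygon by the opposite-quadrant alternation.
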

\begin{theorem}
\label{theorem:blue-redmatching}
For any set of \( n \) red points and \( n \) blue points in the plane, there exists a Tverberg red-blue matching.
\end{theorem}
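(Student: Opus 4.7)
To prove Theorem~\ref{theorem:blue-redmatching}, I would proceed by induction on $n$, using a discrete ham-sandwich argument at each step in the spirit of halving lines. The base case $n = 1$ is immediate, since the single edge connecting the unique red and blue points is trivially a Tverberg matching.

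For the inductive step, when $n$ is odd, the ham-sandwich theorem (in its discrete planar form) yields a line $\ell$ passing through exactly one red point $r^{\ast}$ and one blue point $b^{\ast}$ such that each open half-plane cut by $\ell$ contains exactly $(n-1)/2$ red and $(n-1)/2$ blue points. For $n$ even, one instead obtains a ham-sandwich line disjoint from the data points that bisects each color class. I would include $r^{\ast}b^{\ast}$ as an edge of the matching in the odd case, and recursively construct Tverberg sub-matchings $\mathcal{M}^{+}$ and $\mathcal{M}^{-}$ on the points lying on the two sides of $\ell$.

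The main obstacle is combining the recursively obtained sub-matchings with the anchor edge $r^{\ast}b^{\ast}$ to produce a \emph{single} common Tverberg point for the full matching; the sub-matchings' common points generally do not coincide. To address this, I would strengthen the inductive hypothesis to assert that the common Tverberg point of the sub-matching can be placed on a prescribed line or segment---for instance, on the segment $[r^{\ast}, b^{\ast}]$ when the configuration lies in a closed half-plane whose boundary passes through a designated red-blue pair. Since $[r^{\ast}, b^{\ast}] \subset D(r^{\ast}b^{\ast})$, this strengthening forces both sub-matchings' common points onto the same segment, and any shared point among them is a global Tverberg point for the combined matching.

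The main difficulty will be verifying the strengthened hypothesis through the recursion: the ham-sandwich line chosen in each recursive call must be compatible with the target segment inherited from the outer call. I expect this to be handled by a continuous sweeping or rotation argument, tracking how admissible ham-sandwich lines in the sub-problem vary and locating one whose induced anchor segment meets the outer target. An alternative approach I would keep in mind---possibly cleaner---is to construct the matching directly by selecting a well-balanced point $z$ (for example, a centerpoint that balances both colors) and then verifying Hall's marriage condition in the bipartite compatibility graph $\{(r,b) : z \in D(rb)\}$ using halving-line estimates around $z$.
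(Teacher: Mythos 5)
Your proposal is a strategy outline rather than a proof, and the divide-and-conquer scheme has a gap that is not merely technical. Even granting your strengthened inductive hypothesis---that each sub-matching admits a Tverberg point somewhere on the prescribed segment \( [r^{\ast},b^{\ast}] \)---the combination step still fails: the set of admissible Tverberg points of \( \mathcal M^{+} \) on that segment is some sub-interval \( I^{+} \), the corresponding set for \( \mathcal M^{-} \) is another sub-interval \( I^{-} \), and nothing forces \( I^{+}\cap I^{-}\neq\emptyset \). So "any shared point among them" may simply not exist, and the induction does not close. Moreover, the strengthened hypothesis itself is left unproved (it is a substantially stronger statement than the theorem, and it is not clear it is even true in the form you state it, since for points deep inside the half-plane the disks \( D(rb) \) need not come near the boundary line at all unless the matching is chosen very carefully), and the "continuous sweeping or rotation argument" meant to reconcile the recursive ham-sandwich lines with the inherited target segment is not specified. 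Your alternative sketch (fix a centerpoint \( z \) and verify Hall's condition for the bipartite graph of pairs \( rb \) with \( z\in D(rb) \)) is also unsubstantiated: a centerpoint only guarantees a \( 1/3 \) fraction of each color in every half-plane, which is not enough to verify Hall's condition, and no candidate verification is given.

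For contrast, the paper avoids recursion entirely. It takes \emph{two orthogonal bisecting lines} \( \ell_{\alpha} \) and \( \ell_{\alpha+\pi/2} \) meeting at a point \( o_{\alpha} \); the elementary observation is that \( o_{\alpha} \) lies in every disk whose diameter has endpoints in opposite (closed) quadrants. It then suffices to find one angle \( \alpha \) at which the number of red points in each quadrant equals the number of blue points in the opposite quadrant, since then one can match them across \( o_{\alpha} \). A single balance condition \( r_{1}(\alpha)=b_{3}(\alpha) \) implies the other three, and the integer-valued function \( F(\alpha)=r_{1}(\alpha)-b_{3}(\alpha) \) changes by at most \( 1 \) at its finitely many jumps and satisfies \( F(0)+F(\pi/2)+F(\pi)+F(3\pi/2)=0 \), so it must vanish somewhere by a discrete intermediate value argument. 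If you want to salvage your approach, you would need to replace the interval-intersection step with some such global balancing mechanism; the rotating-lines argument is essentially the clean way to do that in one shot.
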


Remark that Theorem~\ref{theorem:cycleintheplane} is a refinement of the main result from~\cite{soberon2020tverberg} mentioned earlier: Our approach also works for an even set of points in the plane. Theorem~\ref{theorem:blue-redmatching} is the main result from~\cite{huemer2019matching}, however, we give a new proof which seems to be simpler than the original one. It is worth mentioning that our proofs of these theorems are very similar and this is not a coincidence because of the following connection between them. For any finite set of points in the plane, one can consider the colored multiset containing each point of the set colored in red and also a copy of each point colored in blue. Hence, in this setting, a Tverberg cycle can be viewed as a special red-blue Tverberg matching.

Our main results are higher-dimensional generalizations of the main theorem from~\cite{bereg2019maximum} and~\cite{huemer2019matching}.

\begin{theorem}
\label{theorem:open-Tverberg-R-d}
For any even set of distinct points in \( \mathbb R^d \), there exists an open Tverberg matching.
\end{theorem}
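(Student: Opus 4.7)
The plan is to show that the perfect matching $\mathcal M^{*}$ which maximizes $\sum_{xy\in\mathcal M^{*}}|x-y|^{2}$ over all perfect matchings is always an open Tverberg matching. Introducing the shorthand $g_{xy}(z):=(z-x)\cdot(z-y)$ and $\rho(\mathcal M):=\min_{z\in\mathbb R^{d}}\max_{e\in\mathcal M}g_{e}(z)$, the statement that $\mathcal M$ is open Tverberg is equivalent to $\rho(\mathcal M)<0$ (since $z$ lies in the open ball of diameter $xy$ iff $g_{xy}(z)<0$), so the goal becomes to show $\rho(\mathcal M^{*})<0$.

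First I would extract the key swap inequality for $\mathcal M^{*}$. For any two edges $xy,uv\in\mathcal M^{*}$, both alternative re-pairings $\{xu,yv\}$ and $\{xv,yu\}$ have sum of squared lengths no greater than $|xy|^{2}+|uv|^{2}$. Expanding, this yields $(x-u)\cdot(y-v)\leq 0$ and $(x-v)\cdot(y-u)\leq 0$. Adding these and rewriting in terms of midpoints $m_{e}:=(x+y)/2$ and half-lengths $r_{e}:=|x-y|/2$ produces
\[
    |m_{e_{1}}-m_{e_{2}}|^{2}\;\leq\;r_{e_{1}}^{2}+r_{e_{2}}^{2}\qquad(\star)
\]
for every pair of edges $e_{1},e_{2}\in\mathcal M^{*}$.

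Next I argue by contradiction: assume $\rho^{*}:=\rho(\mathcal M^{*})\geq 0$ and let $z^{*}$ be a minimizer of the convex function $\phi(z):=\max_{e\in\mathcal M^{*}}g_{e}(z)$. Let $S$ be the set of edges $e$ with $g_{e}(z^{*})=\rho^{*}$. Since $\nabla g_{e}(z)=2(z-m_{e})$, the subgradient optimality $0\in\partial\phi(z^{*})=\conv\{2(z^{*}-m_{e}):e\in S\}$ rewrites as $z^{*}=\sum_{e\in S}\lambda_{e}m_{e}$ for some $\lambda_{e}\geq 0$ with $\sum_{e\in S}\lambda_{e}=1$. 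Using the identity $g_{e}(z)=|z-m_{e}|^{2}-r_{e}^{2}$, tightness gives $|z^{*}-m_{e}|^{2}=\rho^{*}+r_{e}^{2}$ for each $e\in S$.

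Finally, the variance identity for the weighted points $\{m_e\}_{e\in S}$,
\[
    \sum_{e\in S}\lambda_{e}|z^{*}-m_{e}|^{2}\;=\;\tfrac{1}{2}\sum_{e,f\in S}\lambda_{e}\lambda_{f}|m_{e}-m_{f}|^{2},
\]
has left-hand side equal to $\rho^{*}+\sum_{e}\lambda_{e}r_{e}^{2}$; applying $(\star)$ and $\sum\lambda_{e}=1$, its right-hand side is bounded above by $\sum_{e}\lambda_{e}r_{e}^{2}-\sum_{e}\lambda_{e}^{2}r_{e}^{2}$. Rearranging produces $\rho^{*}\leq-\sum_{e}\lambda_{e}^{2}r_{e}^{2}<0$, since the points are distinct (so $r_{e}>0$) and some $\lambda_{e}>0$. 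This contradicts $\rho^{*}\geq 0$, completing the argument. The only technical point requiring care is the subgradient step placing $z^{*}$ in the convex hull of tight midpoints, but this is just the standard KKT/minimax characterization of the unconstrained minimum of the convex function $z\mapsto\max_{e}g_{e}(z)$; the rest is algebraic manipulation.
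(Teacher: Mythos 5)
Your argument is correct, and it takes a genuinely different route from the paper's. The paper proves Theorem~\ref{theorem:open-Tverberg-R-d} by infinite descent: it minimizes \(P(\mathcal M)=\min_x H_{\mathcal M}(x)\) over matchings (with a secondary minimization of the number of tight edges), passes to a dependent set of midpoints via Lemma~\ref{lemma:point_in_convex_hull}, and then uses the obtuse-graph machinery of Lemmas~\ref{lemma:isolated} and~\ref{lemma:orthogonality} together with the Grossman--H\"aggkvist alternating-cycle lemma to build a strictly better matching, reaching a contradiction. You instead make a single extremal choice --- the matching maximizing the total squared edge length --- and extract from the \emph{two} admissible re-pairings of any pair of edges the metric inequality \(|m_{e_1}-m_{e_2}|^2\le r_{e_1}^2+r_{e_2}^2\); combined with the convex-hull/subgradient characterization of the minimizer (which is exactly the content of the paper's Lemma~\ref{lemma:point_in_convex_hull}) and the variance identity, this yields the explicit bound \(\rho^*\le-\sum_e\lambda_e^2 r_e^2<0\) directly, with no descent step and no combinatorial lemma. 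I checked the algebra: the two swap inequalities are \((x-v)\cdot(y-u)\le 0\) and \((x-u)\cdot(y-v)\le 0\) (you attribute them to the opposite re-pairings, but since you use their sum this is immaterial), their sum is \(2|m_{e_1}-m_{e_2}|^2-2(r_{e_1}^2+r_{e_2}^2)\), and the variance computation is right. Your approach is close in spirit to the paper's proof of the colorful Theorem~\ref{theorem:blue-red-matching-in-r-d}, which also maximizes \(\sum_{rb}(r-b)^2\); the decisive extra leverage in the monochromatic case is that \emph{both} re-pairings of two edges are legal, which is what produces \((\star)\) and hence strict negativity (openness) --- in the colorful case only one re-pairing preserves colors, and accordingly the paper only obtains closed balls there, with the square example showing this is tight. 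What your route buys is a shorter, self-contained proof that also names the witness matching and quantifies how deep the common point sits inside the open balls; what the paper's route buys is a uniform illustration of the descent method it wants to showcase across both theorems.
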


\begin{theorem}
\label{theorem:blue-red-matching-in-r-d}
    For \( n \) red points and \( n \) blue points in \( \mathbb R^d \), there exists a red-blue Tverberg matching.
\end{theorem}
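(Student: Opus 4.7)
The plan is to combine an extremal matching with Kirszbraun's extension theorem. First, I would let $\mathcal M$ be a red-blue perfect matching that maximizes $W(\mathcal M) := \sum_{rb \in \mathcal M}\|r-b\|^2$ among all such matchings. A standard swap argument --- exchanging two edges $r_ib_i, r_jb_j$ of $\mathcal M$ for $r_ib_j, r_jb_i$ cannot increase $W$ --- gives, after expansion, the inequality
\[
(r_i - r_j)\cdot(b_i - b_j) \leq 0
\]
for every two distinct edges of $\mathcal M$.

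Next I would translate this into a $1$-Lipschitz condition. Writing $m_i := (r_i + b_i)/2$ and $v_i := (r_i - b_i)/2$, one has $D(r_i b_i) = \{z \in \mathbb R^d : \|z - m_i\| \leq \|v_i\|\}$, and a direct expansion gives
\[
(r_i - r_j)\cdot(b_i - b_j) = \|m_i - m_j\|^2 - \|v_i - v_j\|^2.
\]
Hence the swap inequality is equivalent to $\|m_i - m_j\| \leq \|v_i - v_j\|$ for all $i \neq j$; equivalently, the map $\phi \colon \{v_1,\ldots,v_n\} \to \{m_1,\ldots,m_n\}$ defined by $\phi(v_i) := m_i$ is $1$-Lipschitz on its finite domain.

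The last step invokes Kirszbraun's theorem: such a $\phi$ extends to a $1$-Lipschitz map $\tilde{\phi} \colon \mathbb R^d \to \mathbb R^d$. Setting $z := \tilde{\phi}(0)$ gives
\[
\|z - m_i\| = \|\tilde{\phi}(0) - \tilde{\phi}(v_i)\| \leq \|0 - v_i\| = \tfrac{1}{2}\|r_i - b_i\|,
\]
so $z \in D(r_i b_i)$ for every edge $r_ib_i \in \mathcal M$, witnessing that $\mathcal M$ is a Tverberg matching. I anticipate no serious obstacle in this plan; the key step is the identity turning the swap inequality into a $1$-Lipschitz condition between the $v_i$'s and $m_i$'s, and the main non-elementary ingredient is Kirszbraun's extension theorem, which holds in $\mathbb R^d$ for every $d$.
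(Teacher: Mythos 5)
Your proposal is correct, and the final step is genuinely different from the paper's. Both arguments start from the same extremal object: a red--blue matching maximizing the sum of squared edge lengths, and both use the swap inequality \( \langle r_i-r_j, b_i-b_j\rangle \le 0 \) (in the paper this appears, after expansion, as \( \langle r_i,b_j\rangle + \langle r_j,b_i\rangle \ge \langle r_i,b_i\rangle+\langle r_j,b_j\rangle \) being impossible to violate). From there the routes diverge. The paper keeps everything elementary: it introduces the auxiliary function \( H_{\mathcal M'}(x)=\max\{\langle r-x,b-x\rangle : rb\in\mathcal M'\} \), locates its minimizer via a separation argument (Lemma~\ref{lemma:point_in_convex_hull}), and runs an averaging computation over the tight pairs to exhibit an improving swap, contradicting maximality of \(Q\). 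You instead observe the identity \( \langle r_i-r_j,b_i-b_j\rangle = \|m_i-m_j\|^2-\|v_i-v_j\|^2 \) with \(m_i=(r_i+b_i)/2\), \(v_i=(r_i-b_i)/2\), so that local optimality says exactly that \(v_i\mapsto m_i\) is \(1\)-Lipschitz (and well defined, since \(v_i=v_j\) forces \(m_i=m_j\)); the one-point extension form of Kirszbraun's theorem then hands you a point \(z=\tilde\phi(0)\) lying in every ball \(D(r_ib_i)\), since \(0\in B(v_i,\|v_i\|)\) for all \(i\). What each approach buys: the paper's proof is self-contained (only the Separation Theorem is used) and fits its broader ``infinite descent with two potential functions'' narrative, whereas yours is shorter, produces the witness point directly rather than by contradiction, and makes transparent the structural fact that \emph{every} maximum-weight red--blue matching is a Tverberg matching --- at the cost of importing Kirszbraun's extension theorem, a nontrivial result whose standard proof itself rests on a Helly-type argument. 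Both proofs work for arbitrary (not necessarily distinct or generic) point sets and yield the closed-ball conclusion, consistent with the tightness example in the paper.
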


Remark that Theorem~\ref{theorem:blue-redmatching} is a special case of Theorem~\ref{theorem:blue-red-matching-in-r-d}. However, we decided to leave the proofs of both theorems because they are absolutely different. It is worth mentioning that Theorems~\ref{theorem:blue-redmatching} and~\ref{theorem:blue-red-matching-in-r-d} are in a sense tight: Consider the vertices of a square which are colored alternatively in red and blue; for any red-blue matching the intersection of the induced balls is a point.

Our proofs of Theorems~\ref{theorem:open-Tverberg-R-d} and~\ref{theorem:blue-red-matching-in-r-d} are based on the method of infinite descent. Note that both proofs of Tverberg's theorem by Tverberg and Vre{\'{c}}ica~\cite{Tverberg1993} and by Roudneff~\cite{Roudneff2001} give a good illustration of this method. 
Recalling that Tverberg's theorem has a colorful variation (unfortunately, proved only in special cases; see~\cites{Barany1992, Blagojevi2015, Blagojevi2011}), we can also interpret Theorems~\ref{theorem:open-Tverberg-R-d} and~\ref{theorem:blue-red-matching-in-r-d} as a Tverberg-type theorem and its colorful version, respectively.
Remark that the combinatorial and linear-algebraic ingredients of the proofs of Theorems~\ref{theorem:open-Tverberg-R-d} and \ref{theorem:blue-red-matching-in-r-d} are different.

Throughout the paper, we use the standard notation of convex geometry and graph theory; see~the books~\cite{barvinok2002course}~and~\cite{west2001introduction} on convexity and graph theory, respectively.

\medskip

The paper is organized as follows. In Section~\ref{section:prelimitaries for planar results} we introduce the notation and discuss simple observations that we need to prove Theorems~\ref{theorem:cycleintheplane} and \ref{theorem:blue-redmatching} in Sections~\ref{section:prove-theor1} and~\ref{section:blue-red-matching}, respectively. In Subsection~\ref{section:prop-extr-point}, we study properties of the global minimum of a function playing the key role in the proofs of Theorems~\ref{theorem:open-Tverberg-R-d} and~\ref{theorem:blue-red-matching-in-r-d}. In Subsection~\ref{section:prop-obt-graph}, we introduce a concept of obtuse graph and study its properties. Using the lemmas from Subsection~\ref{section:prop-extr-point} and~\ref{section:prop-obt-graph}, we prove Theorem~\ref{theorem:open-Tverberg-R-d} in Section~\ref{section:proof-Theor-3}. Finally, applying the lemma from~Subsection~\ref{section:prop-extr-point}, we give a proof of Theorem~\ref{theorem:blue-red-matching-in-r-d} in Section~\ref{section:proof-perfmat-color}. In Section~\ref{section:discussion}, we discuss the method of infinite descent in the context of Tverberg-type results. Finally, in Section~\ref{section:open_problems}, we propose a number of new open problems.

\subsection*{Acknowledgments.} We thank Pablo Sober{\'o}n for sharing the problem. Besides, we thank J{\'a}nos Pach for drawing our attention to~\cite{dumitrescu2009drawing}. We are also grateful to the members of the Laboratory of Combinatorial and Geometric Structures at MIPT for the stimulating and fruitful discussions. We thanks to the referees for their suggestions that helped us to significantly improve the presentation of the paper.

\section{Preliminaries for planar results}
\label{section:prelimitaries for planar results}

We say that a finite set of points is \textit{in general position} if no two segments spanned by points of the set are orthogonal or parallel.
It is sufficient to prove Theorems~\ref{theorem:cycleintheplane} and~\ref{theorem:blue-redmatching} additionally assuming that the points are in general position. Indeed, for any \(n\)-tuple \( S \) of points in the plane, consider a sequence \( \{S_k\}_{k=1}^{\infty} \) of \(n\)-tuples of points in general position converging to \( S \). Suppose that for each \( S_k \), we can find a proper Tverberg graph, which naturally induces a graph \(G_k\) with the vertex set \(\{1,\dots,n\}\). Hence, there is a graph \( G \) that occurs infinitely many times in the sequence \(\{G_k\}_{k=1}^\infty\). Since there is a compactum containing all disks induces by pairs of points in \( S \) or \( S_k \), the corresponding graph for \( S \) is also a Tverberg graph. Moreover, if each \(G_k\) is a Hamiltonian cycle (or a perfect red-blue matching), so is the obtained graph. Throughout Sections~\ref{section:prelimitaries for planar results},~\ref{section:prove-theor1}, and~\ref{section:blue-red-matching}, we consider only finite point sets in general position in the plane.

For a finite non-empty set \( S \) of points in the plane, a line is called \textit{bisecting} if there are at most \( |S|/2 \) points of \( S \) in each of its open half-planes. 

Choose any unit vector \( v \). Denote by \( v_\alpha \) the unit vector obtained by the counterclockwise rotation of \( v \) by an angle \( \alpha \). Let \( H_\alpha \) be the union of bisecting lines for \( S \) orthogonal to \(v_\alpha \). Therefore, \( H_\alpha\) is a closed plank lying between two parallel bisecting lines, each of those passes through a point of \( S \). Moreover, every line lying between the lines bounding \( H_\alpha \) does not contain a point of \( S \). Denote by \( \ell_\alpha\) the midline of \( H_\alpha\); see Figure \ref{figure:basic}. Since \(H_\alpha=H_{\alpha+\pi}\), we obtain that the lines \(\ell_\alpha \) and \(\ell_{\alpha+\pi}\) coincide. For an odd set \(S\), the plank \(H_\alpha\) coincides with the line \(\ell_\alpha\) passing through a point of \(S\). For an even set \( S \), the plank \( H_\alpha \) coincides with the line \( \ell_\alpha \) if and only if \(\ell_\alpha\) passes through exactly two points of~\(S\). Denote by \(\ell_\alpha^+\) the closed half-plane bounded by \(\ell_\alpha\) such that \( v_\alpha\) is its inner normal vector.

\begin{lemma}
\label{lemma:passing-through-a-point}
    For any point of \( S \), there is a bisecting line \(\ell_\alpha\) passing through it.
\end{lemma}
\begin{proof}
    Consider any point \(p\) in the plane and any bisecting line \(\ell_\beta\). Suppose \(p\) does not lie on \(\ell_\beta\) and without loss of generality, assume that \(p \in \ell_\beta^+\). Since the lines \(\ell_\beta\) and \( \ell_{\beta+\pi} \) coincide, we have \(p\not\in\ell_{\beta+\pi}^+\). As \(\ell_\gamma\) is the midline of \(H_\alpha\), the half-plane \( \ell_\gamma^+ \) continuously depends on \( \gamma \), and thus, there is \( \alpha \) such that the point \(p\) lies on \(\ell_{\alpha}\).
\end{proof}
\begin{corollary}
\label{corollary:even-point-set-bisecting-lines}
    For an even set \(S\) of points, there is a bisecting line \(\ell_\alpha\) passing through two points of \(S\).
\end{corollary}
\begin{proof}
    By Lemma~\ref{lemma:passing-through-a-point}, there is a bisecting line \(\ell_\alpha\) passing through a point of \(S\).
    Since \(\ell_\alpha\) is the midline of \(H_\alpha\), it coincides with \( H_\alpha\), and hence, contains exactly two points of \(S\).
\end{proof}

Denote by \(o_\alpha\) the intersection point of lines \( \ell_\alpha \) and \(\ell_{\alpha+\pi/2}\). These lines determine four \textit{closed} quadrants in the plane. Let us enumerate them in the counterclockwise order by \(1,2,3,4\) starting from \(\ell_\alpha^+\cap\ell_{\alpha+\pi/2}^+\); see Figures ~\ref{figure:theorem-tverberg-cycle} and~\ref{figure:thereom-red-blue-in-the plane}, where we denote the quadrants as \(Q_1,Q_2,Q_3,Q_4\), respectively. 
Remark that a point may belong to two distinct quadrants if and only if it lies on the intersection of their boundaries.

\begin{corollary}
\label{corollary: odd-point-set-bisecting-lines}
    For an odd set \(S\) of points, there are bisecting lines \(\ell_{\alpha}\) and \(\ell_{\alpha+\pi/2}\) such that each of them passes through exactly one of two distinct points of \(S\).
\end{corollary}
\begin{proof}
    Consider two points \(x,y\in S\) at the maximum distance among all pairwise distances between points of \(S\). Clearly, both of them lie on the boundary of the convex hull of \(S\). By Lemma~\ref{lemma:passing-through-a-point}, there is a bisecting line passing through the point \(x\). If it contains two points of \(S\), we may rotate it around \(x\) a little bit in such a way that the resulting bisecting line \(\ell_\alpha\) contains only the point \(x\) of \( S \). We claim that the line \(\ell_{\alpha+\pi/2}\) does not pass through \(x\). Indeed, suppose that \(x=o_\alpha\). Without loss of generality, assume that \(y\) lies in the second quadrant, and thus, there is a point \(z\in S\) distinct from \(x\) lying in fourth quadrant. Since \(x\) is the closest point to \(y\) in the fourth quadrant, we have \(\|z-y\|>\|x-y\|\), a contradiction with maximality of the distance between \(x\) and \(y\), see Figure \ref{figure:corollary_eight}. Hence, the point \(x\) does not lie on \(\ell_{\alpha+\pi/2}\), and so, this line passes through another point of \(S\). If it contains two points of \(S\), then we may rotate \(\ell_\alpha\) and \(\ell_{\alpha+\pi/2}\) in such a way that each of them contains exactly one point of \(S\).
\end{proof}
\begin{figure}
    \centering
    \begin{subfigure}[t]{.49\textwidth}
    \centering
        \includegraphics[width=.7\columnwidth]{figure_two_a.mps}
        \caption{Plank~$H_{\alpha}$~and~its~midline~\(\ell_\alpha\).}
        \label{figure:basic}
    \end{subfigure}
    \begin{subfigure}[t]{.49\textwidth}
        \centering
        \includegraphics[width=.7\columnwidth]{figure_two_b.mps}
        \caption{ Proof~of~Corollary~\ref{corollary: odd-point-set-bisecting-lines}.}
        \label{figure:corollary_eight}
    \end{subfigure}
    \caption{}
\end{figure}

Next, we state a trivial observation playing the crucial role in proving Theorems~\ref{theorem:cycleintheplane} and~\ref{theorem:blue-redmatching}.
\begin{keyobservation}
\label{key-observation}
The point \( o_\alpha \) lies in any disk with a diameter whose endpoints lie in two opposite quadrants.
\end{keyobservation}

\section{Proof of Theorem~\ref{theorem:cycleintheplane}}
\label{section:prove-theor1}

Let \( S \) be a set of \( n \) points in the plane. Consider two bisecting lines \(\ell_\alpha\) and \(\ell_{\alpha+\pi/2}\) orthogonal to each other. Later, we specify the choice of these lines. Denote by \(s_i\) the number of points in the \(i\)-th closed quadrant. 

There are several possible cases.

\begin{figure}[h]
  \centering
\begin{tikzpicture}[scale=7, line cap=round,line join=round,>=triangle 45,x=1.0cm,y=2.0cm]
\begin{scope}
\draw [line width=2pt] (0.22,-2.37)-- (0.89,-2.37);
\draw [line width=2pt] (0.54,-2.21)-- (0.54,-2.54);
\draw (0.46,-2.33) node[anchor=north west] {$o$};
\draw (0.84,-2.37) node[anchor=north west] {$\ell_{\alpha}$};
\draw (0.8,-2.21) node[anchor=north west] {$Q_1$};
\draw (0.8,-2.52) node[anchor=north west] {$Q_4$};
\draw (0.22,-2.52) node[anchor=north west] {$Q_3$};
\draw (0.22,-2.21) node[anchor=north west] {$Q_2$};
\draw [line width=2pt] (0.97,-2.37)-- (1.64,-2.37);
\draw [line width=2pt] (1.3,-2.21)-- (1.3,-2.54);
\draw (1.22,-2.33) node[anchor=north west] {$o$};
\draw (1.59,-2.37) node[anchor=north west] {$\ell_{\alpha}$};
\draw (1.57,-2.21) node[anchor=north west] {$Q_1$};
\draw (1.57,-2.52) node[anchor=north west] {$Q_4$};
\draw (0.98,-2.52) node[anchor=north west] {$Q_3$};
\draw (0.98,-2.21) node[anchor=north west] {$Q_2$};
\draw [line width=2pt] (1.74,-2.37)-- (2.41,-2.37);
\draw [line width=2pt] (2.06,-2.21)-- (2.06,-2.53);
\draw (1.99,-2.33) node[anchor=north west] {$o_\alpha$};
\draw (2.36,-2.37) node[anchor=north west] {$\ell_{\alpha}$};
\draw (2.33,-2.21) node[anchor=north west] {$Q_1$};
\draw (2.33,-2.52) node[anchor=north west] {$Q_4$};
\draw (1.74,-2.52) node[anchor=north west] {$Q_3$};
\draw (1.74,-2.21) node[anchor=north west] {$Q_2$};
\draw (0.37,-2.38) node[anchor=north west] {$x$};
\draw (0.66,-2.38) node[anchor=north west] {$y$};
\draw (1.41,-2.37)-- (1.05,-2.27);
\draw (1.05,-2.27)-- (1.47,-2.48);
\draw (1.47,-2.48)-- (1.21,-2.25);
\draw (1.21,-2.25)-- (1.51,-2.37);
\draw (1.41,-2.37)-- (1.04,-2.47);
\draw (1.04,-2.47)-- (1.5,-2.27);
\draw (1.5,-2.27)-- (1.22,-2.51);
\draw (1.51,-2.37)-- (1.22,-2.51);
\draw (0.41,-2.37)-- (0.7,-2.25);
\draw (0.7,-2.25)-- (0.37,-2.49);
\draw (0.37,-2.49)-- (0.78,-2.28);
\draw (0.78,-2.28)-- (0.29,-2.45);
\draw (0.29,-2.45)-- (0.70,-2.37);
\draw (0.70,-2.37)-- (0.37,-2.24);
\draw (0.77,-2.5)-- (0.37,-2.24);
\draw (0.77,-2.5)-- (0.41,-2.37);
\draw (1.91,-2.37)-- (2.22,-2.28);
\draw (2.22,-2.28)-- (1.83,-2.48);
\draw (1.83,-2.48)-- (2.31,-2.32);
\draw (2.31,-2.32)-- (1.91,-2.51);
\draw (1.91,-2.51)-- (2.06,-2.27);
\draw (2.06,-2.27)-- (2.32,-2.47);
\draw (2.32,-2.47)-- (1.92,-2.29);
\draw (1.92,-2.29)-- (2.2,-2.51);
\draw (2.2,-2.51)-- (1.91,-2.37);
\draw (1.36,-2.38) node[anchor=north west] {$x$};
\draw (1.47,-2.38) node[anchor=north west] {$y$};
\draw (1.86,-2.38) node[anchor=north west] {$x$};
\draw (1.98,-2.25) node[anchor=north west] {$y$};
\begin{scriptsize}
\fill [color=black] (0.41,-2.37) circle (0.45pt);
\fill [color=black] (0.70,-2.37) circle (0.45pt);
\fill [color=black] (0.37,-2.24) circle (0.25pt);
\fill [color=black] (0.7,-2.25) circle (0.25pt);
\fill [color=black] (0.37,-2.49) circle (0.25pt);
\fill [color=black] (0.77,-2.5) circle (0.25pt);
\fill [color=black] (1.41,-2.37) circle (0.45pt);
\fill [color=black] (1.21,-2.25) circle (0.25pt);
\fill [color=black] (1.51,-2.37) circle (0.45pt);
\fill [color=black] (1.5,-2.27) circle (0.25pt);
\fill [color=black] (1.04,-2.47) circle (0.25pt);
\fill [color=black] (1.47,-2.48) circle (0.25pt);
\fill [color=black] (1.05,-2.27) circle (0.25pt);
\fill [color=black] (1.22,-2.51) circle (0.25pt);
\fill [color=black] (0.29,-2.45) circle (0.25pt);
\fill [color=black] (0.78,-2.28) circle (0.25pt);
\fill [color=black] (1.91,-2.37) circle (0.45pt);
\fill [color=black] (2.06,-2.27) circle (0.45pt);
\fill [color=black] (2.22,-2.28) circle (0.25pt);
\fill [color=black] (2.31,-2.32) circle (0.25pt);
\fill [color=black] (1.83,-2.48) circle (0.25pt);
\fill [color=black] (1.91,-2.51) circle (0.25pt);
\fill [color=black] (1.92,-2.29) circle (0.25pt);
\fill [color=black] (2.2,-2.51) circle (0.25pt);
\fill [color=black] (2.32,-2.47) circle (0.25pt);
\end{scriptsize}
\end{scope}
\end{tikzpicture}
\caption{\textit{Case 1.1} (\(n=8\)),  \textit{Case 1.2} (\(n=8\)),  \textit{Case 2} (\(n=9\)).}\label{figure:theorem-tverberg-cycle}
\end{figure}
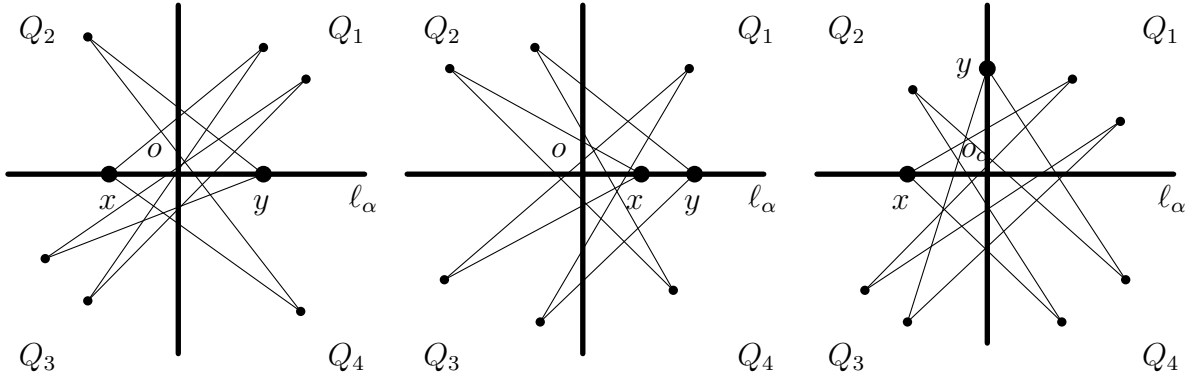

\textit{Case 1.} Let \( n \) be even. By Corollary~\ref{corollary:even-point-set-bisecting-lines}, we may choose a bisecting line \( \ell_\alpha \) passing through exactly two points of \( S \). Denote these points by \( x \) and \( y \). Since no two segments spanned by points of \( S \) are orthogonal, the bisecting line \( \ell_{\alpha+\pi/2}\) contains no points of \( S \). There are two possible cases depending on the arrangement of the points \( x,y,\) and \( o_\alpha \) on the line \( \ell_\alpha \).

\textit{Case 1.1.} The point \( o_\alpha \) lies between the points \( x \) and \( y \) on the line \( \ell_\alpha \). Without loss of generality, assume that \( x \) belongs to the second and third quadrants and the point \( y \) belongs to the first and fourth quadrants. Since \( \ell_\alpha \) and \( \ell_{\alpha + \pi/2}\) are bisecting lines, we obtain that \( s_1=s_3 \) and \( s_2=s_4\). Therefore, we easily construct a Hamiltonian cycle with edges satisfying Observation~\ref{key-observation}: We run alternatively between the points of the third and first quadrants starting from \( x \) and finishing at \(y\), and then we continue to run alternatively between the points of the fourth and second quadrants starting from \( y \) and finishing at \(x\).

\textit{Case 1.2.} The points \( x \) and \( y \) lie on the same side with respect to \( o_\alpha \) on the line \( \ell_\alpha \). Without loss of generality assume that \(x\) and \(y\) belong to the first and fourth quadrants. Using the fact that \( \ell_\alpha \) and \( \ell_{\alpha + \pi/2} \) are bisecting lines, we obtain that \(s_1=s_3+1\) and \(s_4=s_2+1\). Therefore, we easily construct a Hamiltonian cycle with edges satisfying Observation~\ref{key-observation}: We run alternatively between the points of the first and third quadrants starting from \(x\) and finishing at \(y\), and then we continue to run alternatively between the points of the fourth and second quadrants starting from \( y \) and finishing at \( x \).

\textit{Case 2.} Let \( n \) be odd. By Corollary~\ref{corollary: odd-point-set-bisecting-lines}, we may choose bisecting lines \( \ell_{\alpha} \) and \( \ell_{\alpha+\pi/2} \) such that each of them passes through exactly one of two distinct points of \( S \). Denote by \( x\) the point of \(S\) lying on \(\ell_\alpha\) and by \(y\) the point of \(S \) lying on \( \ell_{\alpha + \pi/2} \).

Without loss of generality, assume that the point \( x \) belongs to the second and third quadrants and the point \(y \) belongs to the first and second quadrants. Since \( \ell_\alpha \) and \( \ell_{\alpha+\pi/2}\) are bisecting lines, we obtain that \( s_1=s_3\) and \(s_2=s_4+1\). Therefore, we easily construct a Hamiltonian cycle with edges satisfying Observation~\ref{key-observation}: We run alternatively between the points of the third and first quadrants starting from \(x\) and finishing at \(y\), and then we continue to run alternatively between the points of the second and fourth
quadrants starting from \( y \) and finishing at \(x\). \hfill \(\square\)

\begin{remark}
Recently the following conjecture of Fekete and Woeginger~\cite{fekete1997angle} resembling Theorem~\ref{theorem:cycleintheplane} was confirmed by Biniaz~\cite{biniaz2021acute}: 
For any sufficiently large even number \( n \), every set \(S\) of \( n \) points in the plane can be connected by a Hamiltonian cycle consisting of straight-line edges such that the angle between any two consecutive edges is at most \( \pi/2 \). 
As the first step of the proof he applied the idea from~\cite{dumitrescu2009drawing} similar to our approach. Namely, he considered two orthogonal bisecting lines for \(S\) that partition the plane into four quadrants containing almost the same number of points in \(S\) and used the following observation (compare with Observation~\ref{key-observation}): For any two distinct points \(x, z\) lying in one quadrant and a point \(y\) from the opposite quadrant, the angle \(\angle xyz\) is acute. Also, note that the Tverberg cycle obtained in the proof of Theorem~\ref{theorem:cycleintheplane} contains a lot of acute angles, however, not necessarily all its angles are acute.
\end{remark}

\section{Proof of Theorem~\ref{theorem:blue-redmatching}}
\label{section:blue-red-matching}

Denote by \( S \) the set of \( n \) red points and \( n \) blue points.

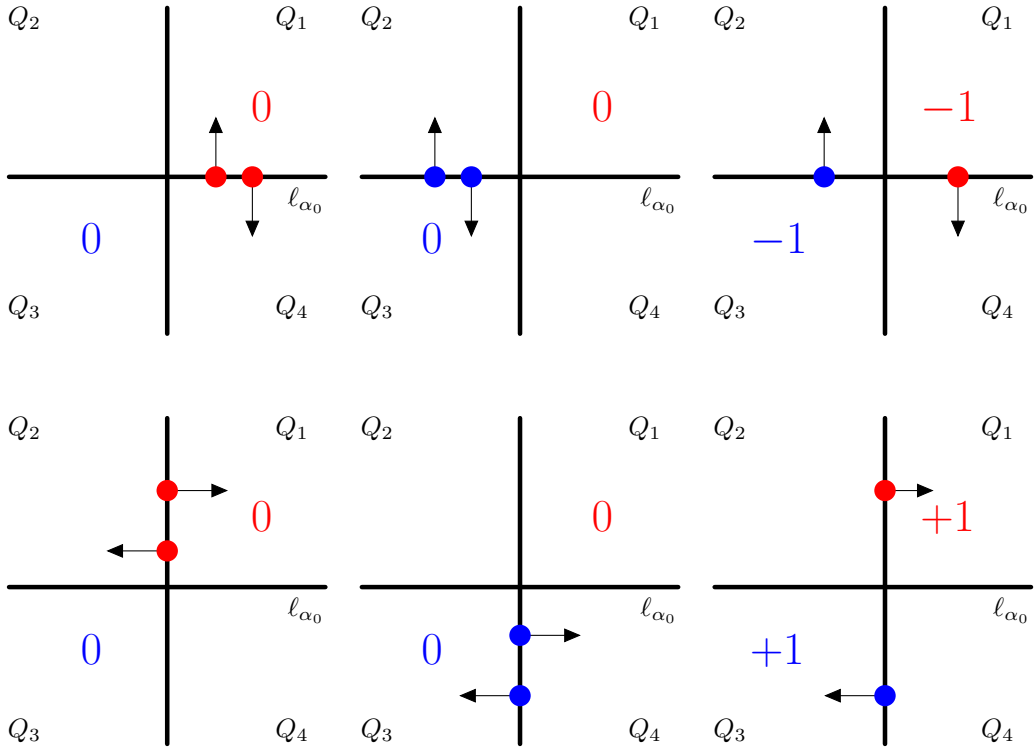
\begin{figure}[h]
  \centering
\begin{tikzpicture}[scale=16, line cap=round,line join=round,>=triangle 45,x=1.0cm,y=1.0cm]
\draw [line width=1.7pt] (0.39,-0.46)-- (0.65,-0.46);
\draw [line width=1.7pt] (0.52,-0.59)-- (0.52,-0.32);
\draw [->] (0.56,-0.46) -- (0.56,-0.41);
\draw [->] (0.59,-0.46) -- (0.59,-0.51);
\draw [line width=1.7pt] (0.39,-0.8)-- (0.65,-0.8);
\draw [line width=1.7pt] (0.52,-0.93)-- (0.52,-0.66);
\draw [->] (0.52,-0.72) -- (0.57,-0.72);
\draw [->] (0.52,-0.77) -- (0.47,-0.77);
\draw [line width=1.7pt] (0.68,-0.46)-- (0.94,-0.46);
\draw [line width=1.7pt] (0.81,-0.59)-- (0.81,-0.32);
\draw [->] (0.74,-0.46) -- (0.74,-0.41);
\draw [->] (0.77,-0.46) -- (0.77,-0.51);
\draw [line width=1.7pt] (0.68,-0.8)-- (0.94,-0.8);
\draw [line width=1.7pt] (0.81,-0.93)-- (0.81,-0.66);
\draw [->] (0.81,-0.84) -- (0.86,-0.84);
\draw [->] (0.81,-0.89) -- (0.76,-0.89);
\draw [line width=1.7pt] (0.97,-0.46)-- (1.23,-0.46);
\draw [line width=1.7pt] (1.11,-0.59)-- (1.11,-0.32);

\draw [->] (1.06,-0.46) -- (1.06,-0.41);
\draw [->] (1.17,-0.46) -- (1.17,-0.51);
\draw [line width=1.7pt] (0.97,-0.8)-- (1.23,-0.8);
\draw [line width=1.7pt] (1.11,-0.93)-- (1.11,-0.66);

\tikzstyle{every node}=[font=\fontsize{10}{30}\selectfont]

\draw (0.60,-0.31) node[anchor=north west] {$Q_1$};
\draw (0.60,-0.55) node[anchor=north west] {$Q_4$};
\draw (0.38,-0.55) node[anchor=north west] {$Q_3$};
\draw (0.38,-0.31) node[anchor=north west] {$Q_2$};
\draw (0.60,-0.65) node[anchor=north west] {$Q_1$};
\draw (0.60,-0.90) node[anchor=north west] {$Q_4$};
\draw (0.38,-0.90) node[anchor=north west] {$Q_3$};
\draw (0.38,-0.65) node[anchor=north west] {$Q_2$};
\draw (0.89,-0.31) node[anchor=north west] {$Q_1$};
\draw (0.89,-0.55) node[anchor=north west] {$Q_4$};
\draw (0.67,-0.55) node[anchor=north west] {$Q_3$};
\draw (0.67,-0.31) node[anchor=north west] {$Q_2$};
\draw (0.89,-0.65) node[anchor=north west] {$Q_1$};
\draw (0.89,-0.90) node[anchor=north west] {$Q_4$};
\draw (0.67,-0.90) node[anchor=north west] {$Q_3$};
\draw (0.67,-0.65) node[anchor=north west] {$Q_2$};
\draw (1.18,-0.31) node[anchor=north west] {$Q_1$};
\draw (1.18,-0.55) node[anchor=north west] {$Q_4$};
\draw (0.96,-0.55) node[anchor=north west] {$Q_3$};
\draw (0.96,-0.31) node[anchor=north west] {$Q_2$};
\draw (1.18,-0.65) node[anchor=north west] {$Q_1$};
\draw (1.18,-0.90) node[anchor=north west] {$Q_4$};
\draw (0.96,-0.90) node[anchor=north west] {$Q_3$};
\draw (0.96,-0.65) node[anchor=north west] {$Q_2$};

\draw [->] (1.11,-0.72) -- (1.15,-0.72);
\draw [->] (1.11,-0.89) -- (1.06,-0.89);

\begin{scriptsize}
\fill [color=red] (0.56,-0.46) circle (0.25pt);
\fill [color=red] (0.59,-0.46) circle (0.25pt);
\fill [color=red] (0.52,-0.72) circle (0.25pt);
\fill [color=red] (0.52,-0.77) circle (0.25pt);
\fill [color=blue] (0.74,-0.46) circle (0.25pt);
\fill [color=blue] (0.77,-0.46) circle (0.25pt);
\fill [color=blue] (0.81,-0.84) circle (0.25pt);
\fill [color=blue] (0.81,-0.89) circle (0.25pt);
\fill [color=blue] (1.06,-0.46) circle (0.25pt);
\fill [color=red] (1.17,-0.46) circle (0.25pt);
\fill [color=red] (1.11,-0.72) circle (0.25pt);
\fill [color=blue] (1.11,-0.89) circle (0.25pt);
\end{scriptsize}

\tikzstyle{every node}=[font=\fontsize{16}{30}\selectfont]

\draw [color=red](0.58,-0.38) node[anchor=north west] {$0$};
\draw [color=blue](0.44,-0.49) node[anchor=north west] {$0$};
\draw [color=red](0.86,-0.38) node[anchor=north west] {$0$};
\draw [color=blue](0.72,-0.49) node[anchor=north west] {$0$};
\draw [color=red](1.13,-0.38) node[anchor=north west] {$-1$};
\draw [color=blue](0.99,-0.49) node[anchor=north west] {$-1$};
\draw [color=red](0.58,-0.72) node[anchor=north west] {$0$};
\draw [color=blue](0.44,-0.83) node[anchor=north west] {$0$};
\draw [color=red](0.86,-0.72) node[anchor=north west] {$0$};
\draw [color=blue](0.72,-0.83) node[anchor=north west] {$0$};
\draw [color=red](1.13,-0.72) node[anchor=north west] {$+1$};
\draw [color=blue](0.99,-0.83) node[anchor=north west] {$+1$};

\tikzstyle{every node}=[font=\fontsize{10}{30}\selectfont]

\draw (0.61,-0.46) node[anchor=north west] {$ \ell_{\alpha_0}$};
\draw (0.90,-0.46) node[anchor=north west] {$ \ell_{\alpha_0}$};
\draw (1.19,-0.46) node[anchor=north west] {$ \ell_{\alpha_0}$};
\draw (0.61,-0.8) node[anchor=north west] {$ \ell_{\alpha_0}$};
\draw (0.90,-0.8) node[anchor=north west] {$ \ell_{\alpha_0}$};
\draw (1.19,-0.8) node[anchor=north west] {$ \ell_{\alpha_0}$};

\end{tikzpicture}
  \caption{Illustrations showing that \( |F(\alpha_0+\varepsilon)-F(\alpha_0-\varepsilon)|\leq 1\).}\label{figure:thereom-red-blue-in-the plane}
\end{figure}

Using the notation of Section~\ref{section:prelimitaries for planar results}, consider a set \(\Omega\) of \(\alpha\in \mathbb R / 2\pi\mathbb Z\) such that none of the bisecting lines \( \ell_{\alpha}\) and \( \ell_{\alpha+\pi/2}\) for \( S \) passes through points of \(S\). Remark that the complementary set \( \overline{\Omega} = (\mathbb R/ 2\pi \mathbb Z) \setminus \Omega \) is finite. Moreover, for every \( \alpha \in \overline {\Omega}\), only one of two lines \( \ell_\alpha \) or \( \ell_{\alpha+\pi/2} \) passes through a point of \( S \) (in fact, it passes through exactly two points of \( S \)).

For \( \alpha \in \Omega \), denote the number of red and blue points in the \( i \)-th quadrant by \( r_i(\alpha) \) and \( b_i(\alpha)\), respectively. We claim that
\begin{equation}
\label{equation:red-blue-equality}
    r_1(\alpha) - b_3(\alpha) =  r_3(\alpha) - b_1(\alpha) = b_2(\alpha) - r_2(\alpha) = b_4(\alpha) - r_2(\alpha).
\end{equation}

Indeed, since the lines \( \ell_\alpha \) and \( \ell_{\alpha+\pi/2} \) are bisecting, we have \( r_1(\alpha) + b_1(\alpha) = r_3(\alpha) + b_3(\alpha)\) and \(r_2(\alpha)+b_2(\alpha)=r_4(\alpha)+b_4(\alpha)\). Consequently, 
\begin{equation}
\label{equation:red-blue-semi}
r_1(\alpha) - b_3(\alpha) = r_3(\alpha) - b_1(\alpha)\ \ \text{ and }\ \ b_2(\alpha) - r_4(\alpha) = b_4(\alpha) - r_2(\alpha) .
\end{equation}
Next, from the equality of the numbers of red and blue points we have
\[
    r_1(\alpha) + r_2(\alpha) + r_3(\alpha) + r_4(\alpha) = |S|/2= n = b_1(\alpha) + b_2(\alpha) + b_3(\alpha) + b_4(\alpha),
\]
and thus,
\[ 
    \big(r_1(\alpha)-b_3(\alpha) \big) + \big(r_3(\alpha) - b_1 (\alpha)\big) =
    \big(b_2(\alpha)-r_4(\alpha) \big) + \big(b_4(\alpha)-r_2(\alpha)\big).
\]
Combining this equality with (\ref{equation:red-blue-semi}), we easily obtain~(\ref{equation:red-blue-equality}).

According to Observation~\ref{key-observation}, it is enough to prove that there is \(\alpha\in \Omega\) such that the number of red points in any of quadrants equals to the number of blue points in its opposite quadrant. By~(\ref{equation:red-blue-equality}), it remains to show that \( r_1(\alpha)=b_3(\alpha) \) for some \( \alpha \in \Omega \). 

Finally, consider the function \( F:\Omega \to \mathbb Z\) defined by \( F ( \alpha ) = r_1( \alpha )-b_3( \alpha ) \). Since \( \ell_\alpha \) is a midline of \(H_\alpha\), it changes continuously for \( \alpha \in \mathbb R / 2\pi \mathbb Z \), and thus, we get that \( F \) is a piecewise constant function. Moreover, it is constant between any two consecutive points of  \( \overline{\Omega} \). We claim that \( F \) changes its value at points of \( \overline{\Omega} \) by 1, 0, or \(-1\). Indeed, consider \( \alpha_0 \in \overline{\Omega} \). Let us change \(\alpha\) continuously from \( \alpha_0 - \varepsilon \) to \(\alpha_0 + \varepsilon\), where \(\varepsilon>0\) is chosen in such a way that \( \alpha_0 \) is the only point of \( \overline{\Omega}\) lying between \( \alpha_0-\varepsilon \) and \( \alpha_0+\varepsilon\). We know that one of the lines \(\ell_{\alpha_0}\) or \(\ell_{\alpha_0+\pi/2}\) contains exactly two points of \(S\) and the other does not pass through any point of \( S \). Therefore, \(|F(\alpha_0+\varepsilon)-F(\alpha_0-\varepsilon) |\leq 2\). Notice that \(|F(\alpha_0+\varepsilon)-F(\alpha_0-\varepsilon) |\) can be equal to 2 only if one of the following properties holds for \( \alpha=\alpha_0 \): (\textit{i}) the boundary of the first quadrant contains two red points; (\textit{ii}) the boundary of the third quadrant contains two blue points; (\textit{iii}) the boundary of the first quadrant contains one red point and the boundary of the third quadrant contains one blue point. There are 6 possible arrangements of these two points of \( S \) and the point \( o_{\alpha_0} \) satisfying one of these properties. 
We leave to the reader as a simple exercise the exhaustion verification of the fact that \(F\) changes by \(1, 0 \) or \(-1\) at the point \( \alpha_0 \); see Figure~\ref{figure:thereom-red-blue-in-the plane}.

Without loss of generality, assume that \( 0 \in \Omega \). Since \(  \ell_{\alpha} = \ell_{\alpha+\pi} \) and \( \ell_{\alpha+\pi/2} = \ell_{ \alpha+3\pi/2 } \), we have 
\begin{align*}
    F(0)&=r_1(0)- b_3(0), & 
    F(\pi / 2)&= r_1(\pi / 2)&-&&b_3(\pi / 2)=& r_2(0) - b_4(0),
\\
    F(\pi)= r_1(\pi)-b_3(\pi) &= r_3(0) - b_1(0), &  
    F(3\pi / 2)&= r_1(3\pi / 2)&-&&b_3(3\pi / 2)=& r_4(0) - b_2(0).
\end{align*}
From the equality of the numbers of red and blue points, we get that the equality \( F(0)+F(\pi/2)+F(\pi)+F(3\pi/2)=0 \) holds, and thus, the function \( F \) takes non-negative and non-positive values. Since the function \( F \) takes only integer values and changes its value at a finite number of points by at most 1, there is a value of \( \alpha \in \Omega \), where \( F \) vanishes. Using the corresponding pairs of orthogonal lines \( \ell_{\alpha}\) and \( \ell_{\alpha+\pi/2} \) and Observation~\ref{key-observation}, we construct the desired matching.
\hfill \(\square\)

\section{Preliminaries for high-dimensional results}

Throughout the remaining sections, we denote the origin of \( \mathbb R^d \) by \( o \).

\subsection{Extreme point of the maximum of dot products.}
\label{section:prop-extr-point}
For a finite set \( \mathcal M\) of pairs of points in \(\mathbb R^d\), consider the function \(H_{\mathcal M}: \mathbb R^d \to \mathbb R\) defined by
\[
H_{\mathcal M}(x) = \max\big\{ \langle a-x, b-x \rangle: ab \in \mathcal M\big\}.
\]

Note that if this function attains a non-positive (negative) value at some point \(x_0\in \mathbb R^d\), then this point \(x_0\) is a common point of all (open) balls with diameters induced by the pairs in \( \mathcal M \), and hence, \(\mathcal M\) is a (an open) Tverberg matching for the set of all points in pairs of~\(\mathcal M\).
\begin{lemma}
Let \(\mathcal M\) be a finite set of pairs of points in \(\mathbb R^d\). Then the function \( H_{\mathcal M}(x) \) attains its strict global minimum at a unique point \(x_{\mathcal M}\). Moreover, if \( \mathcal M_0 \) is the subset of \( \mathcal M \) consisting of all pairs \(ab\) such that \( \langle a-x_{\mathcal M}, b-x_{\mathcal M} \rangle = H_{\mathcal M} (x_{\mathcal M}) \) and \( K \) is the set of the midpoints of pairs in \( \mathcal M_0 \), then \( x_{\mathcal M} \in \conv K\). 
\label{lemma:point_in_convex_hull}
\end{lemma}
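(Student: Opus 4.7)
The plan is to exploit a classical algebraic identity that turns $H_{\mathcal M}$ into a maximum of squared-distance functions, and then invoke standard convex analysis.

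The first step is the identity
\[
\langle a - x, b - x\rangle \;=\; \bigl\| x - m_{ab}\bigr\|^2 \;-\; \tfrac{1}{4}\|a-b\|^2,
\]
where $m_{ab} = (a+b)/2$ is the midpoint of $ab$, which follows from a direct expansion. Plugging this into the definition of $H_{\mathcal M}$ shows that $H_{\mathcal M}$ is the maximum of finitely many strictly convex paraboloids $x \mapsto \|x - m_{ab}\|^2 + c_{ab}$. The maximum of strictly convex functions is strictly convex, and each summand tends to $+\infty$ as $\|x\|\to\infty$, so $H_{\mathcal M}$ is coercive. Standard convex analysis then yields existence and uniqueness of the global minimum $x_{\mathcal M}$.

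For the ``moreover'' part, I would argue by contradiction. Suppose $x_{\mathcal M}\notin\conv K$. Since $\conv K$ is closed and convex, the separation theorem provides a unit vector $v\in\mathbb R^d$ such that $\langle v, m - x_{\mathcal M}\rangle > 0$ for every $m\in K$. Consider the perturbation $x_t = x_{\mathcal M} + tv$ with $t>0$ small. For each active pair $ab\in\mathcal M_0$ (so $m_{ab}\in K$), expanding gives
\[
\|x_t - m_{ab}\|^2 \;=\; \|x_{\mathcal M} - m_{ab}\|^2 \;-\; 2t\langle v, m_{ab} - x_{\mathcal M}\rangle \;+\; t^2,
\]
which is strictly less than $\|x_{\mathcal M} - m_{ab}\|^2$ for all sufficiently small $t>0$, uniformly over the finite set $\mathcal M_0$. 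Hence the values $\langle a - x_t, b - x_t\rangle$ on $\mathcal M_0$ are all strictly smaller than $H_{\mathcal M}(x_{\mathcal M})$. By continuity and the definition of $\mathcal M_0$, the pairs in $\mathcal M\setminus\mathcal M_0$ produce values strictly below $H_{\mathcal M}(x_{\mathcal M})$ at $x_{\mathcal M}$, hence also at $x_t$ for $t$ small enough. Taking the maximum therefore yields $H_{\mathcal M}(x_t) < H_{\mathcal M}(x_{\mathcal M})$, contradicting minimality.

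The main technical subtlety is making sure the perturbation argument is controlled uniformly: one needs to handle the active and inactive pairs in $\mathcal M$ separately, and use that $\mathcal M$ is finite so that ``small enough $t$'' can be chosen uniformly. Once this is in place, the rest is bookkeeping. An alternative, even shorter formulation of the last paragraph would be to quote that the subdifferential of $H_{\mathcal M}$ at $x_{\mathcal M}$ equals $\conv\{2(x_{\mathcal M}-m): m\in K\}$ and must contain $0$ at a minimizer, so $x_{\mathcal M}\in\conv K$ directly; but the separation-and-perturbation argument is self-contained and, I expect, fits the paper's style better.
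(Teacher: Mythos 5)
Your proposal is correct and follows essentially the same route as the paper: the same identity \(\langle a-x,b-x\rangle=\|x-\tfrac{a+b}{2}\|^2-\tfrac14\|a-b\|^2\) to get strict convexity and coercivity, and the same separation-plus-small-perturbation argument (handling active and inactive pairs separately) for the ``moreover'' part. The only difference is presentational: you spell out the uniform choice of the step size and mention the subdifferential alternative, while the paper compresses these into ``a standard compactness argument.''
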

\begin{proof}
Since \( \langle a-x, b-x \rangle =\left\|\frac{a+b}{2}-x\right\|^2-\left\|\frac{a-b}{2}\right\|^2 \), the function $H_{\mathcal M}$ is strictly convex and bounded from below, and thus, it attains its strict global minimum at a unique point~$x_{\mathcal M}$. Without loss of generality we may assume that \( x_{\mathcal M} \) coincides with the origin \( o \). 
Suppose to the contrary that \( o\not \in \conv K \). By the separation theorem~\cite{barvinok2002course}, there exists a non-zero vector \( t \) such that 
\(\langle c, t \rangle > 0 \) for any \(c\in \conv K\). Hence, for sufficiently small \( \varepsilon>0\), the point \(\varepsilon t \) is closer to each point of \( K \) than the origin, and thus, for any pair \( ab\in \mathcal M_0 \), the dot product \( \langle a-\varepsilon t, b- \varepsilon t \rangle \) is strictly less than \( H_{\mathcal M}(o) \). Also, for sufficiently small \( \varepsilon>0 \) and any \(ab\in \mathcal M\setminus \mathcal M_0\), the value of \( \langle a - \varepsilon t, b - \varepsilon t \rangle \) does not exceed \( H_{ \mathcal M }(o) \). Therefore, there is a point \(\varepsilon t \) such that \(  H_{\mathcal M}(\varepsilon t)< H_{\mathcal M}(o)\), which contradicts the fact that \( H_{\mathcal M} \) attains its global minimum at \( o \).  
\end{proof}

For the sake of brevity, we call the matching \(\mathcal M_0\) from Lemma~\ref{lemma:point_in_convex_hull} \textit{balancing} for \(\mathcal M\).

\subsection{Properties of an obtuse graph}
\label{section:prop-obt-graph}

We call a finite set \( V \subset \mathbb R^d \) \textit{dependent}
if there are positive coefficients \(\lambda_v\) for \(v\in V\) such that
\[
    \sum_{v \in V} \lambda_v v = o.
\]

We say that a graph \( G \) is \textit{obtuse} if its vertex set is a dependent set
and its vertices \(a,b\) are adjacent if and only if \( \langle a, b \rangle <0 \).
Let us show some properties of an obtuse graph.

\begin{lemma}
\label{lemma:isolated}
    A vertex of an obtuse graph \( G \) is isolated if and only if it coincides with the origin \(o\).
\end{lemma}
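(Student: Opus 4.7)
The plan is to handle the two directions separately, with the easy direction being immediate and the substantive direction relying on the dependence relation.

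For the ``if'' direction, suppose a vertex equals \( o \). Then for every other vertex \( b \in V \), we have \( \langle o, b \rangle = 0 \), which is not strictly negative, so \( o \) shares no edge with any other vertex. Hence \( o \) is isolated.

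For the ``only if'' direction, I would take an isolated vertex \( a \in V \) and show that \( a = o \). Isolation means that \( \langle a, b \rangle \geq 0 \) for every \( b \in V \setminus \{a\} \), since edges correspond precisely to strictly negative inner products. The key step is to exploit the dependence of \( V \): there exist \( \lambda_v > 0 \) with \( \sum_{v \in V} \lambda_v v = o \). Taking the inner product with \( a \) yields
\[
    0 = \left\langle \sum_{v \in V} \lambda_v v,\, a \right\rangle = \lambda_a \langle a, a \rangle + \sum_{v \in V \setminus \{a\}} \lambda_v \langle v, a \rangle.
\]
Every summand on the right-hand side is nonnegative (the first because \( \lambda_a > 0 \) and \( \langle a, a \rangle = \|a\|^2 \geq 0 \); the rest by the isolation inequality and the positivity of \( \lambda_v \)), and they add up to zero. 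Therefore each summand vanishes; in particular \( \lambda_a \|a\|^2 = 0 \), and since \( \lambda_a > 0 \) this forces \( a = o \).

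There is no real obstacle here: the proof is a one-line application of the dependence relation after correctly reading off the nonstrict inequality \( \langle a, v \rangle \geq 0 \) from the non-adjacency condition. The only point worth noting is that the strict negativity in the definition of adjacency is exactly what turns ``not adjacent'' into the nonstrict \( \geq 0 \), which is what allows the sign argument to conclude that each term in the sum is individually zero.
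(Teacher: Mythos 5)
Your proof is correct and is essentially the paper's argument read contrapositively: the paper takes a nonzero vertex and deduces from the same dot-product identity that some $\langle u,v\rangle<0$, while you take an isolated vertex and deduce $\lambda_a\|a\|^2=0$. Same identity, same sign analysis, no gap.
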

\begin{proof}
If the origin is a vertex of \(G\), then it is isolated because of \(\langle v,o\rangle = 0 \) for any \(v\in V(G)\). Hence, it remains to show that any non-zero vertex \(v\) is not isolated. Indeed, there are positive \( \lambda_u \) for \( u \in V(G) \) such that 
\[
    \sum_{u\in V(G)} \lambda_u u=o.
\]
Considering the dot product of this vector and \( v \), we obtain
\[ 
    \lambda_v \langle v, v \rangle + \sum_{u\in V(G)\setminus\{v\}} \lambda_u \langle u, v \rangle =0.
\]
Since \( \lambda_w > 0 \) for \( w \in V(G) \), there is a vertex \( u \in V(G)\setminus \{v\}\) such that \( \langle u, v \rangle <0\), and thus, the vertex \(v\) is not isolated.
\end{proof}

\begin{lemma}
    \label{lemma:orthogonality}
    Any two vertices from different connected components of an obtuse graph $G$ are orthogonal.
\end{lemma}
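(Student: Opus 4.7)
The plan is to exploit the positive dependence relation together with the observation that vertices in different components are non-adjacent, hence have non-negative inner product. Let $U$ be one connected component of $G$ and let $W = V(G) \setminus U$ be the union of the remaining components. I want to show that every $u \in U$ and $w \in W$ satisfy $\langle u, w\rangle = 0$.

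First I would form the weighted partial sum $s_U = \sum_{u \in U} \lambda_u u$, where the $\lambda_v > 0$ are the coefficients witnessing the dependence of $V(G)$. Since $\sum_{v \in V(G)} \lambda_v v = o$, we have $s_W := \sum_{w \in W} \lambda_w w = -s_U$. Computing the dot product two ways,
\[
\langle s_U, s_W\rangle \;=\; \sum_{u \in U}\sum_{w \in W} \lambda_u \lambda_w \langle u, w\rangle \;=\; -\|s_U\|^2.
\]
By definition of the obtuse graph, vertices from distinct components are not adjacent, hence $\langle u, w\rangle \ge 0$ whenever $u \in U$ and $w \in W$. So the middle sum above is non-negative while the right-hand side is non-positive.

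This forces both sides to equal $0$. In particular $s_U = o$ and every term in the double sum vanishes; since all $\lambda_u, \lambda_w$ are strictly positive, this yields $\langle u, w\rangle = 0$ for all $u \in U$ and all $w \in W$, which is exactly the statement of the lemma.

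I do not anticipate a real obstacle: the argument is a short convex/algebraic computation once one packages the positive dependence as $s_W = -s_U$. The only place that requires a moment's care is the equivalence between the middle and right expressions being $0$ and the conclusion that every individual $\langle u, w\rangle$ vanishes, which uses the sign condition $\langle u,w\rangle \ge 0$ for non-adjacent pairs together with $\lambda_v > 0$. If I wished, I could appeal to Lemma~\ref{lemma:isolated} to handle the degenerate component $\{o\}$ separately, but the computation above treats it uniformly.
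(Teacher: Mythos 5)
Your proof is correct and is essentially identical to the paper's: the same decomposition into a component $U$ and its complement $W$, the same use of the positive dependence to write $\sum_{w\in W}\lambda_w w = -\sum_{u\in U}\lambda_u u$, and the same sign argument forcing every term $\lambda_u\lambda_w\langle u,w\rangle$ in the non-negative double sum to vanish. No gaps.
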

\begin{proof}
Denote by \(U\subset V(G)\) the set of vertices of some connected component. Put \(W=V(G)\setminus U\). We show that any vector from \(U\) is orthogonal to any vector of \(W\). Since \(V(G)\) is a dependent set, there are positive \(\lambda_v\) for all \(v\in V(G)\) such that
\[
    \sum_{u\in U} \lambda_u u= -\sum_{w\in W} \lambda_w w.
\]
Therefore, we have
\[
    \Big\langle \sum_{u\in U}\lambda_uu, \sum_{w\in W}\lambda_w w\Big\rangle\leq 0.
\]
Since \(\lambda_v>0\) for all \(v\in V(G)\) and there are no edges between \(U\) and \(W\), that is, \(\langle u, w\rangle \geq 0 \) for all \(u\in U\) and \(w\in W\), we obtain \(\langle u, w\rangle =0 \) for all \(u\in U\) and \( w\in W \).
\end{proof}

\section{Proof of Theorem~\ref{theorem:open-Tverberg-R-d}}
\label{section:proof-Theor-3}

Let \( S \) be an even set of distinct points in \( \mathbb R^d \). For any perfect matching \( \mathcal{M} \) on \( S \), we consider the function \( H_{\mathcal M} \) from Subsection~\ref{section:prop-extr-point}. Recall that
\[
H_{\mathcal M}(x) = \max\big \{\langle a-x,b-x\rangle: ab\in \mathcal M \big\}.
\]
If \( H_{\mathcal M}(x) < 0\)  for some matching \( \mathcal M\) and some \( x\in \mathbb R^d \), then the point \(x\) is a common interior point of the balls \(\Ball (ab)\), where \(ab\in \mathcal M\), and so, \( \mathcal M \) is an open Tverberg matching. Hence, suppose to the contrary that for any perfect matching \( \mathcal M \) and any \( x\in \mathbb R^d\), we have \( H_{ \mathcal M }(x) \geq 0 \). Consider the function \(P \) depending on a matching \( \mathcal M \) defined by
\[
    P(\mathcal M):=\inf_{x\in\mathbb R^d} H_{\mathcal M}(x)=\min_{x\in \mathbb R^d} H_{\mathcal M}(x).
\]

Among all perfect matchings \( \mathcal M \) for \( S \), choose a matching \( \mathcal M_1 \) such that 
\[
P(\mathcal M_1)=\min_{\mathcal M} P(\mathcal M) \geq 0. 
\]
For the sake of brevity, put \( m:=P(\mathcal M_1) \). Additionally assume that among all perfect matchings \( \mathcal M \) with \(P(\mathcal M)= m \), the balancing matching \( \mathcal M_2 \) for \(\mathcal M_1\) has the minimum size. According to Lemma~\ref{lemma:point_in_convex_hull}, for \( \mathcal M_1\), there is a unique point \(x_{\mathcal M}\) such that \( H_{\mathcal M_1}(x_{\mathcal M_1}) = P(\mathcal M_1)=m \), and without loss of generality, we may assume that \( x_{\mathcal M_1} \) coincides with the origin~\( o \).
According to Lemma~\ref{lemma:point_in_convex_hull}, the midpoints  of some submatching \( \mathcal M_3 \subseteq \mathcal M_2 \) form a dependent set. Let \( S_1 \subseteq S \) be the union of the points of the pairs from \( \mathcal M_3 \). Hence the set \( S_1 \) is dependent as well.

Consider the obtuse graph \( G \) with the vertex set \( S_1 \). Let \( G_1,\dots, G_r \) be its connected components. Since we assume that all points of \( S \) are distinct, Lemma~\ref{lemma:isolated} yields that there is at most one isolated vertex. If there is a vertex of \(G\) coinciding with \( o \), then we set \( v_1 = o \), otherwise let \( v_1 \) be any vertex of \( G \). Without loss of generality, assume that \( v_1 \) is a vertex of \( G_1 \). By Lemma~\ref{lemma:isolated}, the connected components \( G_i\) for \( i\in \{2,\dots, r\} \) contains at least two vertices, and thus, for each \(i\in \{2,\dots, r\}\), we may choose a vertex \(v_i\)~from \(G_i\) such that \( v_1v_i \not \in \mathcal M_3 \). By Lemma~\ref{lemma:orthogonality}, we have \( \langle v_1, v_i \rangle =0 \) for \(i\in \{2,\dots, r\} \). 
Put
\[
    E_{\pi/2}=\big\{ v_1v_i: i\in \{2,\dots, r\} \big\}.
\]

Consider the graph \( G_0 \) on the vertex set \(S_1\) and with the edge set
\[
E(G) \cup E_{\pi/2} \cup \mathcal M_3.
\]
Remark that the sets \( E(G)\cup E_{\pi/2} \) and \( \mathcal M_3 \) do not intersect. Let us color the edges in \( E(G)\cup E_{\pi/2} \) in red and the edges in \( \mathcal M_3 \) in blue.

Recall that an edge \( e \) of a connected graph \( H \) is called a \textit{cut edge}, if \( H - e \) is a disconnected graph. The key combinatorial ingredient of our proof is the following result of Grossman and H\"aggkvist; see~Corollary~1 in \cite{Grossman1983}.

\begin{lemma}
\label{lemma:lemma-main}
Let $M$ be a perfect matching in a graph $H$. If no edge of $M$ is a cut edge of $H$, then $H$ has a cycle whose edges are taken alternately from
$M$ and $H - M$.
\end{lemma}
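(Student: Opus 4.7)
The plan is to establish the contrapositive: if $H$ has no $M$-alternating cycle, then some edge of $M$ is a cut edge. To that end I would first reduce the statement to a theorem about uniqueness of perfect matchings. Given two perfect matchings $M \neq M'$ of $H$, the symmetric difference $M \triangle M'$ is a nonempty subgraph in which every vertex has degree $0$ or $2$, so it is a disjoint union of cycles; each such cycle strictly alternates between edges of $M$ and edges of $M' \subseteq E(H)\setminus M$. Conversely, taking the symmetric difference of $M$ with any $M$-alternating cycle yields a perfect matching distinct from $M$. So $H$ has an $M$-alternating cycle if and only if $M$ is not the unique perfect matching of $H$, and the lemma reduces to the classical statement of Kotzig: a connected graph with a unique perfect matching has a cut edge lying in that matching.

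I would prove the reduced statement by induction on $|V(H)|$, assuming $H$ is connected (otherwise work componentwise). The base case $|V(H)| = 2$ is immediate. For the inductive step, pick any $M$-edge $e = uv$ and suppose, for contradiction, that no $M$-edge is a cut edge; in particular $H - e$ is connected. In $H - e$, the matching $M \setminus \{e\}$ is a near-perfect matching exposing exactly the two vertices $u$ and $v$. If $H - e$ admits any perfect matching, then $M \setminus \{e\}$ is not maximum in $H - e$, so by Berge's theorem there exists an $(M\setminus\{e\})$-augmenting path, which must run from $u$ to $v$. Taking the symmetric difference of $M$ with the edges of this path together with $e$ produces a second perfect matching $M' \neq M$ of $H$, contradicting the uniqueness of $M$.

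The step that requires real care is disposing of the remaining case: that $H - e$ has \emph{no} perfect matching, for every choice of $e \in M$. Here I would invoke the Tutte--Berge formula: the deficiency of $H - e$ must be at least $2$, which forces the existence of a vertex set $U \subseteq V(H-e)$ whose removal creates at least $|U|+2$ odd components in $H-e$. Examining how the single edge $e$ interacts with these odd components, together with the fact that $H$ itself has the perfect matching $M$, should pin down a block of $H$ attached to the rest of the graph through a single $M$-edge, which is then the desired cut edge in $M$. This Tutte-type analysis is the main obstacle, and it is precisely the place where the Grossman--H\"aggkvist argument makes its own clever extremal choice to circumvent the case split; in a self-contained write-up I would either carry out this Tutte decomposition carefully or refer to their original proof.
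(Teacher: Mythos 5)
The paper itself gives no proof of this lemma --- it is quoted as Corollary~1 of Grossman and H\"aggkvist --- so the question is whether your self-contained attempt is complete. Your opening reduction is correct and clean: an $M$-alternating cycle exists if and only if $H$ has a perfect matching other than $M$ (symmetric difference in both directions), so the lemma is equivalent, component by component, to Kotzig's theorem that a connected graph with a unique perfect matching has a cut edge belonging to that matching. Had you stopped there and cited Kotzig (1959), the argument would be complete modulo a standard reference, which is no worse than what the paper does.

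The attempted proof of Kotzig's theorem, however, has a genuine hole, and it sits exactly where all the content is. Under the hypothesis that $M$ is the unique perfect matching of $H$, your Case~A (that $H-e$ admits a perfect matching) is vacuous: any perfect matching of $H-e$ is already a perfect matching of $H$ avoiding $e$, contradicting uniqueness before Berge's theorem is even invoked. Hence every $e\in M$ falls into Case~B, which is precisely the case you label ``the main obstacle'' and do not carry out; note also that your induction hypothesis is never used anywhere in the sketch. The Tutte--Berge computation you propose does not finish the job on its own: for $e=uv\in M$ with $e$ not a cut edge, it yields a nonempty set $U$ with $o(H-U)=|U|$ such that $e$ is a bridge of an \emph{even component} $C$ of $H-U$ splitting it into two odd pieces. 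One can then see that $M$ restricted to $C$ is the unique perfect matching of $C$ and apply induction inside $C$, but a bridge of $C$ need not be a bridge of $H$ because vertices of $C$ may have further edges into $U$; closing this loop is the real work, and it is missing. To repair the write-up, either cite Kotzig's theorem explicitly after your (valid) reduction, or supply an honest argument for the bridgeless case --- for instance, first dispose of bridges $f\notin M$ by inducting on the components of $H-f$, and then prove directly that a connected bridgeless graph with a perfect matching contains an $M$-alternating cycle.
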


In the graph \( G_0 \), the blue edges form a perfect matching and no blue edge is a cut edge because \(v_1\) is connected to all components of \( G \). Hence, the graph \( G_0 \) satisfies the conditions of the Lemma~\ref{lemma:lemma-main}, and thus, there is an alternating cycle 
with blue and red edges. Denote by \( \mathcal R \) and \( \mathcal B \) the sets of red and blue edges of this cycle, respectively.

Consider the following perfect matching
    \[
    \mathcal M_4=\left(\mathcal M_1\setminus \mathcal B\right)\cup \mathcal R.
    \]
    Since \(\langle a,b \rangle \le 0\) for any \( ab \in \mathcal R \), we have $H_{\mathcal M_4}(o) \le H_{\mathcal M_1}(o)=m$. Therefore, \( H_{\mathcal M_4}(o)=m\) and the origin \( o \) coincides with the point \( x_{\mathcal M_4}\).
    Recall that \( \mathcal B \subseteq \mathcal M_3\subseteq \mathcal M_2\) and the set \( \mathcal R \) contains at most one edge from \( E_{\pi/2} \) because all edges of \( E_{\pi/2} \) are incident to \(v_1\), and thus, the set \( \mathcal R \) contains at least one pair \(ab\) with \( \langle a, b\rangle < 0\leq m \). Therefore, the size of the balancing matching for \(\mathcal M_4\) is strictly less than the size of \(\mathcal M_2\), the balancing matching for \(\mathcal M_1\). This contradicts our choice of \( \mathcal M_1 \). Therefore, \(H_{\mathcal M_1}(x)<0\) for some \(x\in \mathbb R^d\), and so, \( \mathcal M_1 \) is the desired matching. \hfill \(\square\)

\begin{remark}
    The reader can easily check that instead of \(H_{\mathcal M} \) in the proof of Theorem~\ref{theorem:open-Tverberg-R-d}, one may use the function \(G_{\mathcal M}:\mathbb R^d \to \mathbb R \) defined by
    \[
        G_{\mathcal M}(x)=\max \left\{ \left\|x-(a+b)/2\right\| -\left\|(a-b)/2\right\|: ab\in \mathcal M \right\}.
    \]
    The structure and key steps of the proof using this function is an almost word-by-word repetition of the above proof.
\end{remark}

Also, we refer to Remark~\ref{remark:Q-vs-open-balls}, where we suggest an alternative approach to finish the proof of Theorem~\ref{theorem:open-Tverberg-R-d}. Since this approach deals with the function \(Q\) introduced in the proof of Theorem~\ref{theorem:blue-red-matching-in-r-d}, we discuss it only at the end of Section~\ref{section:proof-perfmat-color}.

\section{Proof of Theorem~\ref{theorem:blue-red-matching-in-r-d}}
\label{section:proof-perfmat-color}

Let \( R \) and \( B \) be the sets of \( n \) red points and \( n \) blue points in \(\mathbb R^d\), respectively.
Consider the function \( Q \) depending on a red-blue perfect matching \(\mathcal M \) for \( R \cup B \) defined by
\[ 
    Q(\mathcal M) = \sum_{rb\in \mathcal M} \|r - b\|^2 =
    \sum_{ r\in R } \| r \|^2  + \sum_{ b\in B } \|b\|^2- 2\sum_{rb\in \mathcal M} \langle r,b\rangle.
\]

Let \( \mathcal M_1 \) be a matching for which the function \( Q \) attains its maximum.
Next, we consider the function \( H_{\mathcal M_1}: \mathbb R^d \to \mathbb R \) from Subsection~\ref{section:prop-extr-point} defined by
\[
H_{\mathcal M_1}(x) = \max\big \{\langle r-x,b-x\rangle: rb\in \mathcal M_1 \big\}.
\]
If \( H_{\mathcal M_1}(x) < 0\)  for some \( x\in \mathbb R^d \), then this point \(x\) is a common point of the balls \(\Ball (ab)\), where \(ab\in \mathcal M_1\), and so, \( \mathcal M_1 \) is a Tverberg matching.  Thus, without loss of generality, we assume that \( H_{\mathcal M_1}\) attains its minimum at the point \( o \) and \(H_{\mathcal M_1}(o)>0\). Applying Lemma~\ref{lemma:point_in_convex_hull} to the matching \( \mathcal M_1 \), we have that \( o \) lies in the convex hull of the midpoints of pairs from the balancing matching \( \mathcal M_2 \) for \(\mathcal M_1\). Assuming \( \mathcal M_2=\{ r_1b_1,\dots, r_kb_k\} \), where \(r_i\in R \) and \( b_i \in B \), we have
\[
o= \sum_{i=1}^k \lambda_{i} (r_i+b_i),
\]
where \( \lambda_{i}\geq 0 \) and \(\sum_{i=1}^k \lambda_i=1\), and so,
\[
\sum_{i=1}^k \lambda_i r_i=-\sum_{i=1}^k \lambda_i b_i.
\]
This equality yields
\[
0\geq \Big\langle \sum_{i=1}^k \lambda_i r_i,\sum_{i=1}^k \lambda_i b_i \Big\rangle = \sum_{i=1}^k \lambda_i^2 \langle r_i, b_i\rangle + \sum_{1\leq i<j\leq k} \lambda_i \lambda_j \big(\langle r_i, b_j \rangle + \langle r_j, b_i \rangle\big).
\]
Using \( \langle r_i, b_i \rangle = H_{\mathcal M_1}(o) \) and supposing \(\langle r_i, b_j \rangle + \langle r_j, b_i \rangle\geq 2H_{\mathcal M_1}(o)\) for all distinct \( i \) and \( j \), we get
\[
    0\geq H_{\mathcal M_1}(o) \cdot \Big( \sum_{i=1}^m \lambda_i \Big)^2=H_{\mathcal M_1}(o)>0,
\]
a contradiction. Therefore, \(\langle r_i, b_j \rangle + \langle r_j, b_i \rangle< 2H_{\mathcal M_1}(o)= \langle r_i, b_i\rangle +\langle r_j, b_j\rangle \) for some distinct \( i\) and \( j\). 
Next, consider the new perfect red-blue matching 
\[ 
    \mathcal M_3= \mathcal M_1\setminus \{r_ib_i, r_jb_j\} \cup \{r_ib_j, r_jb_i\}
\]
satisfying the inequality 
\[ 
Q(\mathcal M_3)=  \sum_{r\in R} \|r\|^2  + \sum_{b\in B} \|b\|^2- 2\sum_{rb\in \mathcal M_3} \langle r,b\rangle >
\sum_{r\in R} \|r\|^2  + \sum_{b\in B} \|b\|^2- 2\sum_{rb\in \mathcal M_1} \langle r,b\rangle = Q(\mathcal M_1).
\] 
This contradicts our choice of \( \mathcal M_1 \). Therefore, \( H_{ \mathcal M_1} (x) \leq 0 \) for some \(x\in \mathbb R^d\), and so, \( \mathcal M_1 \) is the desired matching.
\hfill \(\square\)
\begin{remark}
\label{remark:function Q in Huemer paper}
    The authors of~\cite{huemer2019matching} consider exactly the same  function \(Q\) in the context of Theorem~\ref{theorem:blue-redmatching}, the two-dimensional version of Theorem~\ref{theorem:blue-red-matching-in-r-d}, and show that a matching for which the function \( Q \) attains its maximum is a Tverberg matching. In particular, they apply Helly's theorem and reduce Theorem~\ref{theorem:blue-redmatching} to the case when there are exactly 3 points of each color.
\end{remark}

\begin{remark}
\label{remark:Q-vs-open-balls}
    The reader can easily check that the matchings \(\mathcal M_1\) and \( \mathcal M_4 \) from the proof of Theorem~\ref{theorem:open-Tverberg-R-d} satisfy the inequality \(Q(\mathcal M_1)<Q(\mathcal M_4)\), where \( Q \) is the function introduced in the proof of Theorem~\ref{theorem:blue-red-matching-in-r-d}. The key reason is that \(\mathcal M_4\) contains at least one edge \(ab\not\in \mathcal M_1\) with \( \langle a, b\rangle < 0\). This implies that one can use the values of the function \(Q\) as an alternative invariant in the proof of Theorem~\ref{theorem:open-Tverberg-R-d}.
\end{remark}

\section{Discussion: Proving Tverberg-type theorems using~the~method~of~infinite~descent}
\label{section:discussion}

First, we sketch the proof of Tverberg's theorem by Roudneff~\cite{Roudneff2001}. For an \(r\)-partition \( \mathcal P \) of a set of \((r-1)(d+1)+1\) points in \(\mathbb R^d\), consider the function \(h_{\mathcal P}:\mathbb R^d\to \mathbb R \) defined by
\[
    h_{\mathcal P}(y)=\sum_{X\in \mathcal P} \dist^2 (y, \conv X),
\]
where \( \dist(A, B) \) is the distance between sets \(A, B\subset \mathbb R^d\). Since the function \( h_{\mathcal P} \) is convex, it attains its minimum. Choose a partition \( \mathcal P_1 \) for which this minimum is the smallest possible. If this minimum is 0, then we are done. Therefore, we suppose that it is positive and attained at \(y=y_{\mathcal P_1}\). Then, analyzing the arrangement of the point \( y_{\mathcal P_1} \) and the convex hulls of \( X \in \mathcal P_1 \), we find a partition \( \mathcal P_2 \) such that 
\[
h_{\mathcal P_2}(y_{\mathcal P_1})<h_{\mathcal P_1} (y_{\mathcal P_1}),
\]
a contradiction. Moreover, the partition \( \mathcal P_2 \) slightly differs from \(\mathcal P_1 \) --- they share all but two common sets. Also, remark that Tverberg and Vre{\'{c}}ica~\cite{Tverberg1993} used a similar approach but instead of \( h_{\mathcal P} \), they consider a different function \(g_{\mathcal P}:\mathbb R^d\to \mathbb R\) defined by 
\[
    g_{\mathcal P}(y)=\max_{X\in \mathcal P} \dist(y, \conv X).
\]

The proofs of our Tverberg-type results, Theorems~\ref{theorem:open-Tverberg-R-d} and~\ref{theorem:blue-red-matching-in-r-d}, are based on the method of infinite descent as well but involve novel ideas. At the last step of the proof of Theorem~\ref{theorem:open-Tverberg-R-d}, we consider a new perfect matching \(\mathcal M_4 \) that can differ from \( \mathcal M_1\) \textit{in more than two edges}, that is, the matchings can be very different (recall that in Roudneff's proof, the partitions \( \mathcal P_1 \) and \( \mathcal P_2 \) differs only in two sets). In the proof of Theorem~\ref{theorem:blue-red-matching-in-r-d}, we consider \textit{two functions} instead of one as in Roudneff's proof: the function \( Q \) depending only on matchings and the function \( H_{\mathcal M_1}:\mathbb R^d\to \mathbb R\) depending on a point in \( \mathbb R^d \). Then there are two possibilities: Either the minimum point \( o \) of the function \( H_{\mathcal M_1} \) is the desired intersection point of balls or this point allows to find a new matching \( \mathcal M_3 \) increasing the value of the function \( Q \). 

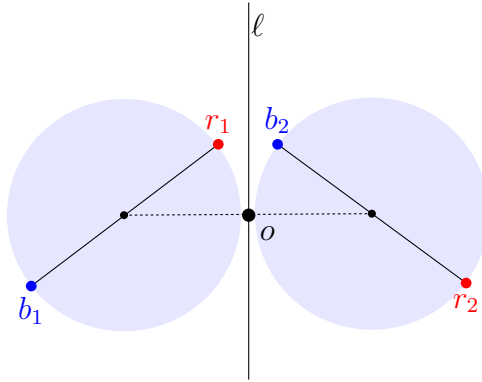
\begin{figure}[h]
  \centering
\begin{tikzpicture}[scale = 2]
\fill [color=blue, opacity=0.1] (8.53,-7.11) circle (0.77cm);
\draw (9.35, -5.7)-- (9.35,-8.2);
\fill [color=blue, opacity=0.1] (10.16,-7.1) circle (0.77cm);
\draw (9.15,-6.64)-- (7.92,-7.58);
\draw (9.54,-6.64)-- (10.78,-7.56);
\draw [dash pattern=on 1pt off 1pt] (8.53,-7.11)-- (10.16,-7.1);
\fill [color=red] (9.15,-6.64) circle (1.0pt);
\fill [color=blue] (7.92,-7.58) circle (1.0pt);
\fill [color=blue] (9.54,-6.64) circle (1.0pt);
\fill [color=red] (10.78,-7.56) circle (1.0pt);
\fill [color=black] (8.53,-7.11) circle (0.75pt);
\fill [color=black] (10.16,-7.1) circle (0.75pt);
\fill [color=black] (9.35,-7.11) circle (1.25pt);
\node at (9.35,-7.11) [below right] {$o$};
\node at (9.15,-6.64) [above] {\textcolor{red}{$r_1$}};
\node at (7.92,-7.58) [below] {\textcolor{blue}{$b_1$}};
\node at (9.54,-6.64) [above] {\textcolor{blue}{$b_2$}};
\node at (10.78,-7.56) [below] {\textcolor{red}{$r_2$}};
\draw[color=black] (9.41, -5.85) node {$\ell$};
\end{tikzpicture}
  \caption{The only possible switching increases the value of $H_{\mathcal M}$ at \(o\).}\label{figure:example}
\end{figure}

Interestingly, a standard argument applied only to the function \( H_{\mathcal M}\) (see Subsection~\ref{section:prop-extr-point}) does not allow us to finish the proof of Theorem~\ref{theorem:blue-red-matching-in-r-d}. To illustrate this, consider the example of two red points \(r_1,r_2\) and two blue points \(b_1,b_2\) drawn\footnote{The reader may assume that the points \(r_1\) and \(b_2\) are symmetric with respect to the line \(\ell\) and the points \(r_2\) and \(b_1\) are symmetric with respect to \(\ell\) as well. Moreover, additionally assume that \(\angle r_1ob_2=\pi/4\) and the distance between the \textit{non-intersecting} disks \( B(r_1b_1)\) and \(B(r_2b_2)\) is close to \(0\).} in Figure~\ref{figure:example}. Clearly, \( \mathcal M_2 =\{r_1b_2, r_2b_1\} \) is the only desired matching. Choosing the second matching \( \mathcal M_1=\{r_1b_1, r_2b_2\} \) as a starting matching, we expect to show the inequality
\(
    H_{\mathcal M_2}(o)< H_{\mathcal M_1}(o),
\)
where \(o\) is the minimum point of \(H_{\mathcal M_1}\). However, it
does not hold because
\[
 H_{\mathcal M_2}(o)=\langle r_1, b_2\rangle > \langle r_1,b_1 \rangle = \langle r_2,b_2\rangle=H_{\mathcal M_1}(o).
\]
This obstacle shows that an extra argument is needed to show that the minimum of the function \(H_{\mathcal M_2} \) is less than \(H_{\mathcal M_1}(o)\). To finish the proof of Theorem~\ref{theorem:blue-red-matching-in-r-d}, we introduce a new function \(Q\) depending only on matching. Unfortunately, we did not find a more direct approach to complete the argument. 

\section{Open problems}
\label{section:open_problems}
\subsection{Intersection of balls}

First, we recall Problem~4.1 from~\cite{soberon2020tverberg}. 

\begin{problem}
    \label{problem:Tverbergcycle}
    Is it true that for any finite set of points in \(\mathbb R^d\), there is a Tverberg cycle?
\end{problem}

One of the possible approaches to answer this problem affirmitevely is to follow the proof of Theorem~\ref{theorem:cycleintheplane}. In particular, we can apply the following higher-dimensional generalization of Observation~\ref{key-observation}: Given \( d \) orthogonal hyperplanes with a common point \( o \) partition \(\mathbb R^d\) into \(2^d\) parts (orthants), the point \( o \) lies in any ball with a diameter whose endpoints are in two opposite orthants. However, the difficulty arises in proving that there are \(d\) orthogonal hyperplanes such that the number of points in the opposite orthants are (almost) the same\footnote{Note that in proof of Theorem~\ref{theorem:cycleintheplane}, we choose the lines \(\ell_{\alpha}\) and \(\ell_{\alpha+\pi/2}\) containing some points of the set properly arranged on these lines. This leads to another difficulty that the desired hyperplanes contain some points of the set such that we may find a proper cycle.}. This statement can be viewed as a discrete vertion of the following question resembling the celebrated Gr\"unbaum hyperplane mass partition problem (see~Subsection~2.1 in~\cite{roldan2022survey}).

\begin{problem} 
Given a probability measure \(\mu\) in \(\mathbb R^d\), absolutely continuous with respect to the Lebesgue measure, does there exist \(d\) orthogonal hyperplanes dividing \(\mathbb R^d\) into \(2^d\) orthants such that the opposite orthants are of the same size with respect to the measure~\(\mu\)?
\end{problem}

One can also consider the blue-red variation of Problem~\ref{problem:Tverbergcycle}: For any set of \( n \) red points and \( n \) blue points in \(\mathbb R^d\), there is a Tverberg red-blue cycle. It is worth mentioning that there is a simple contrexample to this statement even in the plane: The set of the vertices of a rectangle that is not a square colored in red and blue colors alternatively.

Another problem generalizes the main result in~\cite{huemer2019matching}.

\begin{problem}
    \label{problem:Tverberg matching}
    Is true that for any even set of points in \( \mathbb R^d \), the matching maximizing the sum of the distances between points is a (open) Tverberg matching?
\end{problem}

\subsection{Intersection of lenses}

\label{section:discussion-intersection-of-lenses}

Once we prove that there is a Tverberg matching, we can ask whether there is a common point of balls lying relatively deep in all these balls. One of possible ways to deal with this question is to consider \(\alpha\)-lenses instead of balls. There are many interesting results and open problems~\cites{barany1987extension, barany1987covering, magazinov2017positive} related to intersection properties of \(\alpha\)-lenses\footnote{Compare Lemma~3 from~\cite{barany1987extension} and the proofs of the obtuse graph properties in Subsection~\ref{section:prop-obt-graph}.}. For two points \(x,y\in \mathbb R^d\) and \( \alpha \in [0,\pi]\), the \textit{\(\alpha\)-lens} \( \alpha(xy) \) is defined by
\[
    \alpha(xy):=\{x,y\}\cup \{z\in \mathbb R^d\setminus\{x,y\}: \angle xzy\geq \alpha\}.
\]
In particular, if \(\alpha =\pi/2\), then \(\alpha(xy)=\Ball (xy)\).

For an even set \( S \) of points in \( \mathbb R^d \), denote by \( A(S) \) the maximum \( \alpha\in [0, \pi] \) such that there is a matching for the set \( S \) such that the \(\alpha\)-lenses induced by its edges intersect. Let \( A_d \) be the infimum of a set of \( A (S) \) for all even sets \( S \) in \(\mathbb R^d\). Considering the multiset of the vertices of a regular simplex in \(\mathbb R^d\) taken twice, one can show that \( A_d \geq \arccos (-1/d) \). Also, the result of Dumitrescu, Pach, and T\'oth~\cite{dumitrescu2009drawing} implies that \( A_2= 2\pi/3 \); see also~Section 3 in~\cite{soberon2020tverberg}. All these observations lead to the following problem.

\begin{problem}
    Find \(A_d\).
\end{problem}

According to Theorem~\ref{theorem:open-Tverberg-R-d}, for any even set \( S \) of distinct points in \(\mathbb R^d\), we have \( A(S)>\pi/2\). This result can be viewed as the first step towards proving that \( A_d>\pi/2\). Unfortunately, it seems that our approach does not allow to show that. 

\subsection{Intersection of homothets}

Another interesting generalization of Tverberg graphs occurs if in the definition of a Tverberg graph, we replace Euclidean balls by homothets of a centrally symmetric convex body. Remark that the intersection properties of homothets were studied in the recent years; see~\cites{naszodi2017arrangements, polyanskii2017pairwise} and other papers citing them. 

For a centrally symmetric about the origin convex body \( K\subset \mathbb R^d \) and two points \(x,y\in \mathbb R^d\), let \(\BallK (xy):=\frac{x+y}{2}+\lambda K\), where \(\lambda\) is the least positive number such that \(\frac{x+y}{2}+\lambda K\) covers the points \(x\) and \(y\), that is, the segment \(xy\) is a diameter of \(K(xy)\). Let \( G \) be a graph whose vertex set is a finite set of points in \( \mathbb{R}^d \). For a centrally symmetric about the origin convex body \(\BallK\), we say that \( G \) is \textit{a \(K\)-Tverberg graph} if
\[
    \bigcap_{ xy\in E(G)} \BallK(xy) \neq \emptyset.
\]
This definition leads us to the following problem.
\begin{problem}
\label{conjecture:KTverberg-red-blue}
Is it true that for any centrally symmetric convex body \( K \subset \mathbb R^d\) and an even set of points in \(\mathbb R^d\), there is a perfect matching that is a \(K\)-Tverberg graph? Is it true that the matching maximizing the sum of the distances between matched points is a \( K \)-Tverberg graph? Here we assume that the distance between two points is measured in the normed \(d\)-space whose unit ball is \(K\). (Compare with the main result in~\cite{bereg2019maximum}.)
\end{problem}

\section{Data availability}
Data sharing not applicable to this article as no datasets were generated or analysed during the current study.

\bibliographystyle{siam}
\bibliography{biblio}

\end{document}